\newtheorem{theorem}{Theorem}[subsection]
\newtheorem{lemma}[theorem]{Lemma}
\newtheorem{proposition}[theorem]{Proposition}
\newtheorem{corollary}[theorem]{Corollary}
\theoremstyle{definition}
\newtheorem{definition}[theorem]{Definition}
\newtheorem{construction}[theorem]{Construction}
\newtheorem{convention}[theorem]{Convention}
\newtheorem{example}[theorem]{Example}
\newtheorem{warning}[theorem]{Warning}
\newtheorem{remark}[theorem]{Remark}
\DeclareMathOperator*{\hocolim}{hocolim}
\DeclareMathOperator*{\holim}{holim}
\DeclareMathOperator*{\colim}{colim}
\begin{document}
\title{Mod 2 power operations revisited}
\author{Dylan Wilson}
\maketitle
\begin{abstract} 
	In this mostly expository note we take advantage
	of homotopical and algebraic advances
	to give a modern
	account of power operations on the mod 2 homology
	of $\mathbb{E}_{\infty}$-ring spectra. The main
	advance is a quick proof of the Adem  
	relations utilizing the Tate-valued Frobenius as a homotopical
	incarnation of the total power operation. We also give a
	streamlined derivation of the action of power operations
	on the dual Steenrod algebra.
\end{abstract}
\newpage
\tableofcontents
\newpage

\section*{Introduction}\label{sec:intro}
\addcontentsline{toc}{section}{\nameref{sec:intro}}

As someone who entered college at about
the time that Netflix started automatically playing the next episode
of a series, I cannot imagine discovering or verifying the Adem relations
using the tools available to Adem \cite{adem}.\footnote{It was precisely
while trying and failing multiple times to prove the Adem relations
in equivariant homotopy theory that, in act of true laziness, I
stumbled upon the technique explained in this note.} I even find
it hard to \emph{remember} the Adem and Nishida relations. 

Luckily, there is a useful mnemonic device which utilizes
the \textbf{total power operation}:
	\[
	Q(t) := \sum_{i \in \mathbb{Z}} Q^i t^i.
	\]
Here $t$ is an indeterminate, 
and the operation $Q^i: A_* \to A_{*+i}$ acts on the homotopy of 
any $\mathbb{E}_{\infty}$-$\mathbb{F}_2$-algebra $A$.
The total power operation
then produces a map:
	\[
	Q(t): A_* \to A_*(\!(t)\!).
	\]
We extend $Q(t)$ to a ring map
	\[
	Q(t): A_*(\!(s)\!) \to A_*\llbracket s,t\rrbracket[s^{-1}, t^{-1}]
	\]
by requiring that
	\[
	Q(t)(s) = s + s^2t^{-1}.
	\]
With this convention, it is possible to restate
the Adem relations, following Bullett-Macdonald \cite{bullett-macdonald},
Steiner \cite{steiner}, and Bisson-Joyal \cite{bisson-joyal} as:
	\begin{itemize}
	\item (Adem relations) For any $x\in A_*$,
	$Q(t)Q(s)x$ is symmetric in
	$s$ and $t$. 
	\end{itemize}
The usual Adem relations are recovered using a trick with residues
which we will review in \S\ref{ssec:residues}. 
Steiner's proof that the above identity holds is to reduce it to
one of the expressions met in the proof of the Adem
relations as in \cite[p.119]{steenrod-epstein} and \cite[4.7(e,g,i)]{may}.

In the case of Steenrod operations acting on the cohomology
of a space $X$, there is a more conceptual argument
due to Segal \cite[\S4]{bullett-macdonald}.
One can use the diagonal map to produce
an version of the total power operation taking values in
$H^*(X \times \mathrm{B}\Sigma_2)$. Indeed, this is one of the
earlier constructions of Steenrod operations \cite[Ch. VII]{steenrod-epstein}.
The iterated total square then takes values in 
$H^*(X \times \mathrm{B}\Sigma_2 \times \mathrm{B}\Sigma_2)
= H^*(X)[s,t]$ but factors through the total fourth power
which takes values in $H^*(X \times \mathrm{B}\Sigma_4)$. The
automorphism swapping $s$ and $t$ arises as an inner auromorphism
of $\Sigma_4$ so the formula for the iterated square must be symmetric
in $s$ and $t$.
	
Our primary goal is to explain how the Tate diagonal
(\S\ref{ssec:diagonal}) on spectra allows for a similar argument
for general power operations. The reader could probably
reconstruct the argument themselves just from the observation
that the total power operation is the effect on homotopy of the
(non-$\mathbb{F}_2$-linear) map of spectra:
	\[
	A \stackrel{\Delta}{\longrightarrow} 
	(A \otimes_{\mathbb{F}_2}A)^{t\Sigma_2} \to A^{t\Sigma_2}.
	\]
In fact, we take this as a definition, and develop
all the basic properties of power operations efficiently from
there. We hope that this note will give a mnemonic
for the \emph{proofs} of the standard identities
for power operations in much the same way that the work 
of Steiner \cite{steiner}, Bisson-Joyal \cite{bisson-joyal},
and Baker \cite{baker} has provided mnemonics for their
\emph{statements}. 

\subsection*{Outline}

In \S\ref{sec:tate} and \S\ref{sec:powers}
we review the facts we need about the Tate
construction and the Tate diagonal, following Nikolaus-Scholze
\cite{nikolaus-scholze}. In \S\ref{sec:operations} we give three definitions of
the operations $Q^i$: the classical one, one due to Lurie
\cite[\S2.2]{DAGXIII}, and one in terms of the Tate valued Frobenius.
We then explain how to recover the first properties of power operations.

In \S\ref{sec:relations} we turn to the
Adem relations. The key thing to prove is that having a
$\Sigma_4$-equivariant map $A^{\otimes 4} \to A$ produces a
lift of the iterated total power operation through the Frobenius
$A \to A^{t\Sigma_4}$. This takes a little bit of work but the
reader could come up with the argument themselves if they
remember to use the universal property of the Tate diagonal
amongst natural transformations of exact, lax symmetric monoidal
functors over and over again. Indeed, this proof is an
excellent illustration of the computational utility of establishing
such universal properties in the first place.

Finally, in \S\ref{sec:steenrod}, we show how the Bisson-Joyal
and Baker formulations of the Nishida relations
arise naturally from the perspective of the
Tate-valued Frobenius. We end by explaining
how to recover Steinberger's formulas \cite[\S III.2]{bmms} for the action of
power operations on the dual Steenrod algebra. This last step
is mostly algebraic, and essentially due to Bisson-Joyal,
but we have included it for completeness.

\subsubsection*{Acknowledgements} The author is grateful to Tom Bachmann
for comments on an earlier draft.

\section{The Tate construction}\label{sec:tate}

We review the Tate construction (\S\ref{ssec:tate-definitions})
and its universal property (\S\ref{ssec:tate-monoidal})
as well as the important Warwick duality (\S\ref{ssec:warwick}) of Greenlees
\cite{greenlees2} which allows an
alternative computation of the Tate construction. We end 
(\S\ref{ssec:tate-example}) by spelling
out what happens in the case $G = \Sigma_2$. 

\subsection{Definitions}\label{ssec:tate-definitions}
Let $G$ be a finite group and $k$ an $\mathbb{E}_{\infty}$-ring,
and denote by 
	\[
	\mathsf{Mod}_k^{hG} := \mathsf{Psh}(\mathrm{B}G; \mathsf{Mod}_k)
	\]
the $\infty$-category of \textbf{Borel $G$-modules}. There is
a fully faithful embedding 
$\mathsf{Mod}_k^{hG} \to \mathsf{Mod}_k^{G}$ from Borel $G$-modules to
modules over $k$ in \emph{genuine} $G$-spectra
whose essential image consists
of the \textbf{Borel complete $G$-modules}, i.e. those
$X$ such that $X \to F(\mathrm{E}G_+, X)$ is
an equivalence. Let $\mathcal{F}$ be a collection of subgroups
closed under sub-conjugacy, and $\mathrm{E}\mathcal{F}$ the
$G$-space characterized up to homotopy by the requirement
	\[
	\mathrm{E}\mathcal{F}^{H} = \begin{cases}
	* & H \in \mathcal{F}\\
	\varnothing & H \notin \mathcal{F}
	\end{cases}
	\]
and define $\widetilde{\mathrm{E}\mathcal{F}}$ as the cofiber
of $\mathrm{E}\mathcal{F}_+ \to S^0$. 
Then the \textbf{$\mathcal{F}$-Tate spectrum} of a Borel
$G$-spectrum can be computed as \cite[p.443]{greenlees1}:
	\[
	X^{t\mathcal{F}} = (\widetilde{\mathrm{E}\mathcal{F}}
	\wedge F(\mathrm{E}G_+, X))^G,
	\]
where the right hand side is computed in genuine $G$-spectra.

It will be more convenient for us to think of the above as
a computation and not a definition. Instead, we opt to define
the Tate construction by a universal property, following
\cite{nikolaus-scholze}.

To that end, let
	\[
	\left(\mathsf{Mod}_k^{hG}\right)_{\mathcal{F}-\mathrm{ind}} \subseteq
	\mathsf{Mod}_k^{hG}
	\]
be the smallest full, stable subcategory containing all objects
which are left Kan extended from diagrams $\mathrm{B}H \to \mathsf{Sp}$
for some $H \in \mathcal{F}$. 

Recall \cite[\S I.3]{nikolaus-scholze} that, associated to any
exact functor $F: \mathsf{Mod}_k^{hG} \to \mathcal{E}$ to a
presentable stable $\infty$-category $\mathcal{E}$, there is a
natural transformation
	\[
	F \to L_{\mathcal{F}}F
	\]
which is initial amongst natural transformations to exact functors
which annihilate the subcategory 
$\left(\mathsf{Mod}_k^{hG}\right)_{\mathcal{F}-\mathrm{ind}}$. Concretely, 
$L_{\mathcal{F}}F$ is specified by the formula \cite[I.3.3]{nikolaus-scholze}:
	\[
	L_{\mathcal{F}}F(X) = \underset{
	\left(\mathsf{Mod}_k^{hG}\right)_{\mathcal{F}-\mathrm{ind}/X} 
	\ni Y}{\colim}
	F( \mathrm{cofib}(Y \to X)).
	\]
\begin{definition} With notation as above, we define
	\[
	(-)^{t\mathcal{F}} = L_{\mathcal{F}}((-)^{hG}):
	\mathsf{Mod}_k^{hG} \to \mathsf{Mod}_k.
	\]
More generally, if $G \subseteq G'$ we define
	\[
	(-)^{t\mathcal{F}} = L_{\mathcal{F}}((-)^{hG}):
	\mathsf{Mod}_k^{hG'} \to \mathsf{Mod}_k^{hW_{G'}G}
	\]
where $W_{G'}G = N_{G'}G/G$ is the Weyl group of $G$ in $G'$.
\end{definition}

\begin{example} When $\mathcal{F}$ consists only of the trivial
subgroup, we denote $X^{t\mathcal{F}} = X^{tG}$. This can be
computed as the cofiber of the trace map $X_{hG} \to X^{hG}$.
\end{example}

\begin{example} Suppose
$G \subseteq \Sigma_n$ is a subgroup and let $\mathcal{F} = \mathcal{T}$,
be the family of subgroups of $G$ which do \emph{not} act transitively
on $\{1, ..., n\}$. When $G = C_n$ this coincides with the more
commonly seen family of proper subgroups; and when $G = C_p$
this coincides with the family consisting of only the trivial subgroup.
\end{example}

\subsection{Warwick duality}\label{ssec:warwick}

We can dualize the construction in the previous section and
define the \textbf{opposite $\mathcal{F}$-Tate spectrum}\footnote{We stole this
name from \cite{glasman-lawson}.} as
	\[
	X^{t^{\mathrm{op}}\mathcal{F}}:=
	\underset{\left(\left(\mathsf{Mod}_k^{hG}
	\right)_{\mathcal{F}-\mathrm{ind}}\right)_{X/}\ni Y}
	{\holim}
	\mathrm{fib}(X \to Y)_{hG}
	\]
	
Greenlees proved \cite[\S B]{greenlees2} that this construction is not
really new:

\begin{theorem}[Warwick duality] There is a canonical equivalence
	\[
	X^{t^{op}\mathcal{F}} \simeq \Sigma^{-1}X^{t\mathcal{F}}.
	\]
\end{theorem}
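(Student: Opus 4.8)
The plan is to exploit the two defining colimit/limit formulas for $(-)^{t\mathcal{F}}$ and $(-)^{t^{\mathrm{op}}\mathcal{F}}$ and to identify them via a single cofiber sequence. First I would recall the universal property from \S\ref{ssec:tate-definitions}: the functor $(-)^{t\mathcal{F}}$ receives the initial natural transformation $(-)^{hG}\to(-)^{t\mathcal{F}}$ out of $(-)^{hG}$ among exact functors annihilating $\bigl(\mathsf{Mod}_k^{hG}\bigr)_{\mathcal{F}\text{-}\mathrm{ind}}$, and dually $(-)^{t^{\mathrm{op}}\mathcal{F}}$ is built from $(-)_{hG}$ by the holim formula over the coslice. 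So I would begin with the norm (trace) cofiber sequence of exact functors
\[
(-)_{hG} \longrightarrow (-)^{hG} \longrightarrow (-)^{tG},
\]
and more generally, for each object $Y$ in the indexing category, the cofiber sequence relating $\mathrm{fib}(X\to Y)$ to $\mathrm{cofib}(Y\to X)$ after a shift: $\mathrm{cofib}(Y\to X)\simeq\Sigma\,\mathrm{fib}(X\to Y)$.

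Next I would run both constructions in parallel. On one side, $X^{t\mathcal{F}} = \colim_{Y}\, (\mathrm{cofib}(Y\to X))^{hG}$ over the slice $\bigl(\mathsf{Mod}_k^{hG}\bigr)_{\mathcal{F}\text{-}\mathrm{ind}/X}$; on the other, $X^{t^{\mathrm{op}}\mathcal{F}} = \holim_{Y}\,(\mathrm{fib}(X\to Y))_{hG}$ over the coslice $\bigl(\bigl(\mathsf{Mod}_k^{hG}\bigr)_{\mathcal{F}\text{-}\mathrm{ind}}\bigr)_{X/}$. The key observation is that these two indexing categories are related: an object $Y\to X$ of the slice with $Y$ in $\mathcal{F}\text{-}\mathrm{ind}$ gives, via $\mathrm{cofib}(Y\to X)$, an object $X\to \mathrm{cofib}(Y\to X)$ — but $\mathrm{cofib}(Y\to X)$ need not lie in $\mathcal{F}\text{-}\mathrm{ind}$, so the correspondence is not literally an equivalence of categories. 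Instead I would use cofinality: both the colimit and the limit can be computed over a common cofinal subcategory, or — following Greenlees's original argument — over the filtered system obtained by smashing $X$ with a cellular model of $\mathrm{E}\mathcal{F}_+$, whose skeleta realize both the $\mathcal{F}\text{-}\mathrm{ind}$-approximations from below (for the homotopy orbits) and, after cofiber, from above (for the homotopy fixed points). Concretely, if $F_\bullet$ is the filtration of $\mathrm{E}\mathcal{F}_+$ by $\mathcal{F}$-cells, then $\widetilde{\mathrm{E}\mathcal{F}}\simeq\mathrm{cofib}(\mathrm{E}\mathcal{F}_+\to S^0)$, and smashing with $F(\mathrm{E}G_+,X)$ and taking genuine fixed points turns the defining formula $X^{t\mathcal{F}}=(\widetilde{\mathrm{E}\mathcal{F}}\wedge F(\mathrm{E}G_+,X))^G$ into a colimit over cofibers, while the orbit version expresses $X^{t^{\mathrm{op}}\mathcal{F}}$ as the Spanier–Whitehead-type dual limit; the suspension shift is then forced by the single cofiber sequence $\mathrm{E}\mathcal{F}_+\to S^0\to\widetilde{\mathrm{E}\mathcal{F}}$.

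The main obstacle, I expect, is making the cofinality/reindexing argument precise in the $\infty$-categorical language of \cite{nikolaus-scholze} rather than in Greenlees's point-set filtration language: one must check that the colimit over the slice and the limit over the coslice both reduce to the same (filtered or sequential) diagram, and that passing from $\mathrm{fib}$ to $\mathrm{cofib}$ commutes with these (co)limits up to the single suspension. I would handle this by invoking the explicit formula for $L_{\mathcal{F}}F$ quoted in the excerpt, applying it once to $F=(-)^{hG}$ and once to the "dual" functor $F=(-)_{hG}$ (whose dualization reverses arrows), and then observing that the natural transformation $(-)_{hG}\to(-)^{hG}$ induces, on the respective localizations/colocalizations, precisely the connecting map of a cofiber sequence whose third term is $X^{t\mathcal{F}}$ in one description and $\Sigma X^{t^{\mathrm{op}}\mathcal{F}}$ in the other. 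Naturality of the whole construction in $X$ then promotes the resulting equivalence to a canonical one, completing the proof.
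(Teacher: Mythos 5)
The paper does not prove Warwick duality itself; it simply cites Greenlees \cite[\S B]{greenlees2}, so there is no in-paper argument to compare against. Evaluating your sketch on its own terms: the overall plan — reduce both the slice-colimit defining $X^{t\mathcal{F}}$ and the coslice-limit defining $X^{t^{\mathrm{op}}\mathcal{F}}$ to the cellular filtration of $\mathrm{E}\mathcal{F}_+$, and read off the shift from $\mathrm{E}\mathcal{F}_+\to S^0\to\widetilde{\mathrm{E}\mathcal{F}}$ — is indeed the right idea and is faithful to the spirit of Greenlees's argument.

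However, your closing paragraph does not close the argument. You assert that the norm transformation $(-)_{hG}\to(-)^{hG}$ produces, after localization and colocalization, a single cofiber sequence whose third term is $X^{t\mathcal{F}}$ in one reading and $\Sigma X^{t^{\mathrm{op}}\mathcal{F}}$ in the other. Taking the fiber and cofiber of the norm map $X_{hG}\to X^{hG}$ only gives the \emph{trivial-family} Tate construction $X^{tG}$ on both sides, and the observation that $\mathrm{cofib}(f)\simeq\Sigma\,\mathrm{fib}(f)$ for a single map is vacuous here; it does not produce $X^{t\mathcal{F}}$ or $X^{t^{\mathrm{op}}\mathcal{F}}$ for a general family $\mathcal{F}$. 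The substance of the theorem is exactly what your second paragraph flags and then defers: the slice $\bigl(\mathsf{Mod}_k^{hG}\bigr)_{\mathcal{F}\text{-}\mathrm{ind}/X}$ and the coslice $\bigl(\bigl(\mathsf{Mod}_k^{hG}\bigr)_{\mathcal{F}\text{-}\mathrm{ind}}\bigr)_{X/}$ are genuinely different indexing shapes, one computing a colimit of homotopy-fixed-point spectra and the other a limit of homotopy-orbit spectra, and matching them is nontrivial. The missing ingredient that actually makes the comparison work is Spanier--Whitehead duality for the \emph{finite} stages $\widetilde{\mathrm{E}\mathcal{F}}^{(n)}$: duality exchanges smashing against $\widetilde{\mathrm{E}\mathcal{F}}^{(n)}$ (which feeds the colimit computing $X^{t\mathcal{F}}$) with mapping out of $\widetilde{\mathrm{E}\mathcal{F}}^{(n)}$ (which feeds the limit computing $X^{t^{\mathrm{op}}\mathcal{F}}$), and this is precisely what forces the single suspension and allows the two towers to be identified. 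Without that step, the cofinality claim remains a hope rather than an argument, and invoking the formula for $L_{\mathcal{F}}F$ twice does not by itself produce a comparison between a colimit and a limit over different diagrams.
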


In particular, we obtain extra functoriality: if 
$\mathcal{F} \subseteq \mathcal{F}'$, then the original construction
produces a canonical map $(-)^{t\mathcal{F}'} \to (-)^{t\mathcal{F}}$
while the opposite construction, composed with suspension,
produces a map $(-)^{t\mathcal{F}} \to (-)^{t\mathcal{F}'}$.

\subsection{Monoidal structure}\label{ssec:tate-monoidal}

We will make much use of the following excellent
description of the lax symmetric 
monoidal structure on the Tate construction. 

\begin{proposition} There is a natural transformation of lax symmetric
monoidal functors \[(-)^{hG} \to (-)^{t\mathcal{F}}\] which is initial
amongst natural transformations of lax symmetric monoidal functors
with target an exact functor that annihilates
$\left(\mathsf{Mod}_k^{hG}\right)_{\mathcal{F}-\mathrm{ind}}$.
\end{proposition}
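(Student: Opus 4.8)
The plan is to produce the lax symmetric monoidal refinement of $(-)^{hG}\to(-)^{t\mathcal{F}}$ by invoking a symmetric monoidal version of the universal property of $L_{\mathcal{F}}$ recorded in \cite[\S I.3]{nikolaus-scholze}, and then to identify the resulting universal object with $(-)^{t\mathcal{F}}$ as an underlying functor. Concretely, I would work in the $\infty$-category $\mathrm{Fun}^{\mathrm{ex},\mathrm{lax}}(\mathsf{Mod}_k^{hG},\mathsf{Mod}_k)$ of exact lax symmetric monoidal functors, with its natural transformations, and show that the subcategory of those functors which annihilate $\left(\mathsf{Mod}_k^{hG}\right)_{\mathcal{F}-\mathrm{ind}}$ is a (reflective) localization, so that the reflection of the lax symmetric monoidal functor $(-)^{hG}$ exists and comes equipped with an initial natural transformation of lax symmetric monoidal functors. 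The key input is that $\left(\mathsf{Mod}_k^{hG}\right)_{\mathcal{F}-\mathrm{ind}}$ is closed under the relevant monoidal manipulations — more precisely, tensoring an $\mathcal{F}$-induced object with an arbitrary Borel $G$-module again lands in the $\mathcal{F}$-induced subcategory (a projection-formula statement, since $\mathrm{Ind}_H^G$ satisfies $\mathrm{Ind}_H^G(M)\otimes N\simeq \mathrm{Ind}_H^G(M\otimes \mathrm{Res}_H^G N)$, and left Kan extensions along $\mathrm{B}H\to\mathrm{B}G$ are exactly such inductions). This is what guarantees that the localization at this subcategory is compatible with the symmetric monoidal structure on $\mathrm{Fun}^{\mathrm{ex}}(\mathsf{Mod}_k^{hG},\mathsf{Mod}_k)$ given by Day convolution (or, dually, that the localized functors form a symmetric monoidal localization), and hence that $L_{\mathcal{F}}$ refines to an operation on lax symmetric monoidal functors.

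The steps, in order, are: (1) recall the Day convolution symmetric monoidal structure on $\mathrm{Fun}^{\mathrm{ex}}(\mathsf{Mod}_k^{hG},\mathsf{Mod}_k)$, under which lax symmetric monoidal functors are precisely $\mathbb{E}_\infty$-algebras; (2) verify the projection formula, i.e. that $\left(\mathsf{Mod}_k^{hG}\right)_{\mathcal{F}-\mathrm{ind}}$ is a tensor-ideal-like subcategory in the sense needed to make the class of functors annihilating it closed under Day convolution; (3) conclude from the general theory of symmetric monoidal localizations (e.g. \cite[\S I.3]{nikolaus-scholze} together with standard facts about localizing $\mathbb{E}_\infty$-algebras) that $L_{\mathcal{F}}$ preserves $\mathbb{E}_\infty$-algebra structures and that the unit map $F\to L_{\mathcal{F}}F$ is one of $\mathbb{E}_\infty$-algebras, i.e. of lax symmetric monoidal functors, functorially and initially; (4) apply this to $F=(-)^{hG}$, which is lax symmetric monoidal, and observe that by \cite[I.3.3]{nikolaus-scholze} its underlying localization is computed by exactly the colimit formula defining $(-)^{t\mathcal{F}}$, so the lax symmetric monoidal functor produced is $(-)^{t\mathcal{F}}$ on underlying functors. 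The initiality among lax symmetric monoidal natural transformations to exact annihilating functors is then immediate from the corresponding $\mathbb{E}_\infty$-algebra localization universal property.

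The main obstacle is step (2)–(3): making precise that the universal property of $L_{\mathcal{F}}$ from \cite{nikolaus-scholze}, which is stated for exact functors, genuinely upgrades to the lax symmetric monoidal setting. The honest content is that the full subcategory of exact functors annihilating $\left(\mathsf{Mod}_k^{hG}\right)_{\mathcal{F}-\mathrm{ind}}$ is closed under Day convolution — equivalently, that if $F$ annihilates $\mathcal{F}$-induced objects then so does $F\otimes G$ for any exact $G$ — which reduces to the projection formula above, and that the reflection onto this subcategory is therefore a smashing/symmetric-monoidal localization, so that its adjoint inclusion is lax symmetric monoidal and the reflection of an $\mathbb{E}_\infty$-algebra is canonically an $\mathbb{E}_\infty$-algebra. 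Once that closure property is in hand the rest is formal. I would also remark that the same argument works verbatim for the more general functoriality $\mathsf{Mod}_k^{hG'}\to\mathsf{Mod}_k^{hW_{G'}G}$ from the definition, since all the constructions involved are natural in the ambient group.
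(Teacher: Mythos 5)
Your proposal takes essentially the same route as the paper, which proves the statement simply by citing Nikolaus--Scholze's Theorem I.3.6 on lax symmetric monoidal structures and Verdier quotients; your argument is a reconstruction of the proof of that theorem, specialized to the case at hand. The genuinely useful thing you add, which the paper leaves implicit, is the verification that $(\mathsf{Mod}_k^{hG})_{\mathcal{F}\text{-}\mathrm{ind}}$ is a $\otimes$-ideal via the projection formula $\mathrm{Ind}_H^G(M)\otimes N\simeq\mathrm{Ind}_H^G(M\otimes\mathrm{Res}_H^GN)$ --- that is exactly the hypothesis one must check before invoking the Nikolaus--Scholze result. One small technical slip in step (2): the condition guaranteeing that the localization is compatible with Day convolution is closure of the \emph{acyclic} side (functors killed by $L_{\mathcal{F}}$, built from corepresentables by $\mathcal{F}$-induced objects) under tensoring with arbitrary exact functors, not the claim that a local $F$ annihilating the ideal stays annihilating after forming $F\otimes G$; your projection-formula check is precisely what establishes the former, so the argument is sound, but the formulation as stated is slightly off.
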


This follows
from the more general result \cite[I.3.6]{nikolaus-scholze}
about the relationship between Verdier quotients and lax
symmetric monoidal structures.

\subsection{An example}\label{ssec:tate-example}

Let $k$ be a field of characteristic 2. Then we have
	\[
	\pi_*k^{h\Sigma_2} \simeq
	\mathrm{H}^{-*}(\mathrm{B}\Sigma_2, k)= k \llbracket t\rrbracket
	\]
where $t \in \pi_{-1}k^{h\Sigma_2}$ is the 
Stiefel-Whitney class of the canonical line bundle. The Tate
construction has the effect of inverting $t$ and we can compute
	\[
	\pi_*k^{t\Sigma_2} = k(\!(t)\!),
	\]
the algebra of Laurent series over $k$. 

On the other side, the homotopy orbits $k_{h\Sigma_2}$ have a dual
basis on homotopy
	\[
	\pi_*k_{h\Sigma_2} = k\{e_0, e_1, ...\}
	\]
where $e_i$ is the linear dual of $t^i$. The trace map
	\[
	k_{h\Sigma_2} \to k^{h\Sigma_2}
	\]
is zero on homotopy groups and so we have a short exact sequence
	\[
	0\to k\llbracket t\rrbracket \to k(\!(t)\!) \to
	\pi_*\Sigma k_{h\Sigma_2} \to 0
	\]
which identifies the last term as the quotient $k(\!(t)\!)/ k\llbracket t\rrbracket$.
This provides another basis for the homotopy of $k_{h\Sigma_2}$,
and the two are related by the correspondence
	\[
	e_i \leftrightarrow t^{-i-1}.
	\]
Under this interpretation, the composite map
	\[
	k^{t\Sigma_2} \to \Sigma k_{h\Sigma_2} \to \Sigma k
	\]
is given by sending a Laurent series $g(t) = \sum a_it^i$ to the
\textbf{residue} $a_{-1}$. 

Finally, Warwick duality in this context translates to the
computation \cite[16.1]{greenlees-may}
	\[
	\Sigma^{-1}k^{t\Sigma_2} =
	\holim_n (\Sigma^{-n\tau}k)_{h\Sigma_2}=
	\holim_n k \wedge \left(\mathbb{R}P^{\infty}\right)^{-n\tau}
	=
	\holim_n k \wedge \mathbb{R}P^{\infty}_{-n},
	\]
where $\tau$ is the sign representation.

\section{Tate powers}\label{sec:powers}

The source of power operations is the symmetry present on
$X^{\otimes n}$. In \S\ref{ssec:powers} we review several
constructions based on this symmetry. In \S\ref{ssec:goodwillie}
we explain how the construction $X \mapsto 
(X^{\otimes n})^{t\mathcal{T}}$ arises as a Goodwillie derivative;
in particular this construction is exact. In \S\ref{ssec:diagonal},
following
\cite{nikolaus-scholze}, we describe the spectral analog of the diagonal
map we will use when defining power operations.

\subsection{Variants of extended powers}\label{ssec:powers}

Let $\mathcal{C}$ be a symmetric monoidal $\infty$-category. Then
there is a natural functor
	\[
	\mathcal{C} \to \mathcal{C}^{h\Sigma_n} = 
	\mathsf{Fun}(\mathrm{B}\Sigma_n, \mathcal{C})
	\]
given as the composite
	\[
	\mathcal{C} \stackrel{\delta}{\to}
	\left(\mathcal{C}^{\times n}\right)^{h\Sigma_n}
	\to \mathcal{C}^{h\Sigma_n}
	\]
where the latter map is a choice of tensor product. In other words,
for every $X \in \mathcal{C}$, the object $X^{\otimes n}$ has
a $\Sigma_n$-action.

If $\mathcal{C}$ admits homotopy limits and colimits, we can form
both a `symmetric' power of an object and a `divided' power of an object.
We do this more generally for a fixed subgroup $G \subseteq \Sigma_n$.

\begin{definition} We define symmetric and divided power functors as:
	\[
	\mathrm{Sym}^G(X) := \left(X^{\otimes n}\right)_{hG},
	\quad
	\Gamma^G(X) := \left(X^{\otimes n}\right)^{hG}.
	\]
\end{definition}

Finally, if $\mathcal{C}=\mathsf{Mod}_k$ is the $\infty$-category
of $k$-modules over an $\mathbb{E}_{\infty}$-ring $k$, then:

\begin{definition} Let $G \subseteq \Sigma_n$ be a subgroup.
We define the \textbf{Tate power} of $X$ as
	\[
	\mathrm{T}_G(X) := (X^{\otimes n})^{t\mathcal{T}}
	\]
where $\mathcal{T}$ is the family of non-transitive subgroups
of $G$.
\end{definition}

In each case we abbreviate $G$ as $n$ if $G = \Sigma_n$.

\subsection{Tate powers as a Goodwillie derivative}\label{ssec:goodwillie}

Let $\mathcal{C}$ and $\mathcal{D}$ be stable, presentable
$\infty$-categories. Then the full subcategory
	\[
	\mathsf{Fun}^{\mathrm{ex}}(\mathcal{C}, \mathcal{D})
	\subseteq \mathsf{Fun}(\mathcal{C}, \mathcal{D})
	\]
admits a left adjoint \cite[6.1.1.10]{HA}, the 1-excisive approximation:
	\[
	P_1: \mathsf{Fun}(\mathcal{C}, \mathcal{D})
	\to \mathsf{Fun}^{\mathrm{ex}}(\mathcal{C}, \mathcal{D}).
	\]
In the case where $F(0)=0$, we may compute $P_1F$ as
\cite[6.1.1.23,6.1.1.27]{HA}:
	\[
	P_1F(X) = \hocolim_n \Omega^n_{\mathcal{D}}
	F(\Sigma^n_{\mathcal{C}}X).
	\]
I believe the following is well-known but do not know a reference.

\begin{proposition} With notation as in \S\ref{ssec:powers},
There is an equivalence:
	\[
	P_1\Gamma^G \simeq T_G.
	\]
\end{proposition}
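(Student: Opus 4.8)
The plan is to run both functors through the telescope formula for $P_1$. Since $0^{\otimes n}\simeq 0$ in $\mathsf{Mod}_k$ for $n\geq 1$, we have $\Gamma^G(0)\simeq 0$, so the formula recalled above applies and gives, naturally in $X$,
\[
P_1\Gamma^G(X)\simeq\hocolim_m\,\Omega^m\big((\Sigma^m X)^{\otimes n}\big)^{hG}.
\]
There is a natural equivalence $(\Sigma^m X)^{\otimes n}\simeq S^{m\rho}\otimes X^{\otimes n}$ in $\mathsf{Mod}_k^{hG}$, where $\rho$ is the $n$-dimensional permutation representation of $G\subseteq\Sigma_n$ and $S^{m\rho}$ carries its evident $G$-action. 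Write $\rho\simeq\underline{\mathbb{R}}\oplus\bar\rho$ for the splitting off the trivial summand spanned by $(1,\dots,1)$, with $\bar\rho=\rho/\underline{\mathbb{R}}$ the reduced permutation representation, so that $S^{m\rho}\otimes S^{-m}\simeq S^{m\bar\rho}$. Since $(-)^{hG}$ is exact, a trivial desuspension passes through it, and one obtains a natural identification
\[
\Omega^m\big((\Sigma^m X)^{\otimes n}\big)^{hG}\simeq\big(S^{m\bar\rho}\otimes X^{\otimes n}\big)^{hG}.
\]
The only delicate point is to check that, under these identifications, the transition maps of the telescope become the maps induced by the zero section $S^0\hookrightarrow S^{\bar\rho}$. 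This in turn comes down to the assembly map $\Sigma F(W)\to F(\Sigma W)$ for $F=((-)^{\otimes n})^{hG}$ being the lax symmetric monoidal structure map of $(-)^{hG}$ followed by $(-)^{hG}$ applied to the assembly map of $(-)^{\otimes n}$, and the latter is the map induced by the inclusion $S^{\underline{\mathbb{R}}}\hookrightarrow S^\rho$ of the diagonal line — which, after cancelling one trivial suspension, is exactly $S^0\hookrightarrow S^{\bar\rho}$.

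For $T_G$ I would use Greenlees' formula $Y^{t\mathcal{T}}\simeq\big(\widetilde{\mathrm{E}\mathcal{T}}\wedge F(\mathrm{E}G_+,Y)\big)^G$ with $Y=X^{\otimes n}$. The geometric input is that $\widetilde{\mathrm{E}\mathcal{T}}\simeq\hocolim_m S^{m\bar\rho}$ along the zero sections: the $H$-fixed points of this colimit are $\hocolim_m S^{md}$ with $d=\dim\bar\rho^H$, which is contractible when $d\geq 1$ and equals $S^0$ when $d=0$; since $d=(\#\{\text{orbits of }H\})-1$, this is contractible precisely when $H$ acts non-transitively, i.e.\ when $H\in\mathcal{T}$, and equals $S^0$ otherwise, matching the defining property of $\widetilde{\mathrm{E}\mathcal{T}}$. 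Smashing with a colimit commutes with it, genuine $G$-fixed points commute with filtered colimits (being detected on Mackey-functor homotopy groups, which do), and for each $m$ the twist $S^{m\bar\rho}$ is invertible, so $S^{m\bar\rho}\wedge F(\mathrm{E}G_+,X^{\otimes n})\simeq F(\mathrm{E}G_+,S^{m\bar\rho}\otimes X^{\otimes n})$ is Borel complete, with genuine fixed points $\big(S^{m\bar\rho}\otimes X^{\otimes n}\big)^{hG}$. Hence, naturally in $X$,
\[
T_G(X)\simeq\hocolim_m\big(S^{m\bar\rho}\otimes X^{\otimes n}\big)^{hG},
\]
again with transition maps induced by $S^0\hookrightarrow S^{\bar\rho}$.

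Comparing the two computations, the telescopes are termwise equivalent with matching transition maps, and all the identifications are natural in $X$, so they assemble into an equivalence of towers of functors and hence into the desired equivalence $P_1\Gamma^G\simeq T_G$; since $P_1\Gamma^G$ is exact by construction, this also shows $T_G$ is exact. One could also argue more structurally once $T_G$ is known to be exact: the canonical transformation $\Gamma^G\to T_G$ induced by $(-)^{hG}\to(-)^{t\mathcal{T}}$ then factors uniquely through $\Gamma^G\to P_1\Gamma^G$, and the resulting map is an equivalence because the second cross-effect of $(-)^{\otimes n}$ — a sum over proper nonempty subsets $S\subsetneq\{1,\dots,n\}$ of terms permuted by $G$, each stabilized by a subgroup preserving the partition $\{S,S^{c}\}$, hence by a member of $\mathcal{T}$ — is annihilated by $(-)^{t\mathcal{T}}$.

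The main obstacle is the bookkeeping in the first paragraph: making the termwise identifications natural in $X$ and, above all, pinning down the transition maps of the excisive telescope, which requires tracking the splitting $\rho\simeq\underline{\mathbb{R}}\oplus\bar\rho$ through the $\Sigma\dashv\Omega$ adjunction and the lax monoidal structure maps of $(-)^{hG}$, iterated $m$ times. The remaining ingredients — the identification $\widetilde{\mathrm{E}\mathcal{T}}\simeq\hocolim_m S^{m\bar\rho}$, finitariness of genuine fixed points, and Borel completeness of invertible twists of Borel-complete spectra — are standard.
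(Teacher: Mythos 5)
Your argument is correct and is essentially the paper's proof: apply the telescope formula for $P_1$, pull the permutation-representation suspension out of $\Gamma^G$, cancel the trivial summand to get $\Sigma^{j\overline{V}}X^{\otimes n}$, rewrite $(-)^{hG}$ as genuine fixed points of $F(\mathrm{E}G_+,-)$, commute genuine fixed points past the colimit, and recognize $S^{\infty\overline{V}}\simeq\widetilde{\mathrm{E}\mathcal{T}}$. You supply details the paper elides (the identification of the transition maps and the fixed-point check that $\hocolim_m S^{m\overline{V}}$ models $\widetilde{\mathrm{E}\mathcal{T}}$), plus a nice alternative one-line argument via the second cross-effect, but the route is the same.
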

\begin{proof} Let $V$ denote the standard
representation of $\Sigma_n$ on $\mathbb{R}^n$
and $\overline{V}$ the reduced standard representation.
By the formula above we have
	\begin{align*}
	P_1\mathrm{\Gamma}^G(X) &=
	\hocolim_j \Omega^j\mathrm{\Gamma}^G(\Sigma^jX)
	\\&\simeq \hocolim \Omega^j(\Sigma^{jV}X^{\otimes n})^{h\Sigma_n}
	\\&\simeq \hocolim (\Sigma^{j\overline{V}}X^{\otimes n})^{h\Sigma_n}
	\\&\simeq \hocolim 
	(S^{j\overline{V}}\wedge F(\mathrm{E}G_+, X^{\otimes n}))^G\\
	&\simeq 
	(S^{\infty\overline{V}} \wedge F(\mathrm{E}G_+, X^{\otimes n}))^G.
	\end{align*}
The last identification used that genuine fixed points commute with all
homotopy limits and colimits. Finally, observe that $S^{\infty\overline{V}}$
is a model for $\widetilde{\mathrm{E}\mathcal{T}}$.
\end{proof}

The same argument computes the Goodwillie \emph{co}derivative
of $\mathrm{Sym}^G$:

\begin{proposition} The Goodwillie coderivative of $\mathrm{Sym}^G$
is $((-)^{\otimes n})^{t^{op}\mathcal{T}} = \Sigma^{-1}T_G$.
\end{proposition}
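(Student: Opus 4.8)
The plan is to dualize the argument just given for the proposition $P_1\Gamma^G \simeq T_G$, replacing the 1-excisive approximation by its dual. Recall that the Goodwillie coderivative is the right adjoint of the inclusion $\mathsf{Fun}^{\mathrm{ex}}(\mathcal{C},\mathcal{D}) \subseteq \mathsf{Fun}(\mathcal{C},\mathcal{D})$ — call it $P^1$ — and, dually to the formula for $P_1$, when $F(0)=0$ we may compute
	\[
	P^1F(X) = \holim_n \Sigma^n_{\mathcal{D}} F(\Omega^n_{\mathcal{C}}X).
	\]
I would apply this to $F = \mathrm{Sym}^G = ((-)^{\otimes n})_{hG}$. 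The key steps mirror the previous proof line by line: first, $\Sigma^j\mathrm{Sym}^G(\Omega^j X) \simeq \Sigma^j(\Omega^{jV}X^{\otimes n})_{h\Sigma_n}$ using that $(\Omega^jX)^{\otimes n} \simeq \Omega^{jV}X^{\otimes n}$ with $\Sigma_n$ permuting the $j$ copies of $S^{-1}$; then $\Sigma^j \Omega^{jV} \simeq \Omega^{j\overline V}$ after absorbing the trivial summand of $V$; then rewrite $(\Omega^{j\overline V}X^{\otimes n})_{h\Sigma_n}$ as $(S^{-j\overline V}\wedge \mathrm{E}G_+ \wedge X^{\otimes n})^G$ via the genuine-orbits formula $Y_{hG} = (\mathrm{E}G_+ \wedge Y)^G$; and finally pass to the limit to obtain $(S^{-\infty\overline V}\wedge \mathrm{E}G_+\wedge X^{\otimes n})^G$, using that genuine fixed points commute with homotopy limits.

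At this point I would invoke Warwick duality (the theorem of Greenlees recalled in \S\ref{ssec:warwick}) to identify this limit with $\Sigma^{-1}(X^{\otimes n})^{t\mathcal{T}} = \Sigma^{-1}T_G(X)$. Concretely, $S^{-\infty\overline V} = \holim_j S^{-j\overline V}$ is the Spanier-Whitehead dual of $S^{\infty\overline V} = \widetilde{\mathrm{E}\mathcal{T}}$, so the expression computes $X^{t^{\mathrm{op}}\mathcal{T}}$ in the sense of \S\ref{ssec:warwick}: indeed $\mathrm{fib}(X^{\otimes n}\to Y)_{hG}$ for $Y$ running over $\mathcal{T}$-induced objects is modeled by smashing with $\mathrm{E}\mathcal{T}_+ \to S^0$ and taking orbits, and passing to the cofiber $\widetilde{\mathrm{E}\mathcal{T}}$ after dualizing. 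By the Warwick duality equivalence $X^{t^{\mathrm{op}}\mathcal{T}} \simeq \Sigma^{-1}X^{t\mathcal{T}}$ this is $\Sigma^{-1}T_G(X)$, as claimed.

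The main obstacle is bookkeeping around the suspension/desuspension and the reduced representation: one must check carefully that $\Sigma^j\Omega^{jV}\simeq \Omega^{j\overline V}$ is $\Sigma_n$-equivariant (the splitting $V \simeq \mathbb{R}\oplus\overline V$ is canonical and equivariant, so this is fine, but it is the step where signs and the direction of the limit matter), and that the resulting tower $\{S^{-j\overline V}\wedge \mathrm{E}G_+\wedge X^{\otimes n}\}$ is the one whose homotopy limit Warwick duality computes, rather than an off-by-one variant. A cheaper alternative, which I would mention as a remark, is to avoid re-deriving anything: the coderivative $P^1$ is obtained from $P_1$ applied to the opposite categories, $\mathrm{Sym}^G$ on $\mathcal{C}$ is $\Gamma^G$ on $\mathcal{C}^{\mathrm{op}}$, and the identification $((-)^{\otimes n})^{t^{\mathrm{op}}\mathcal{T}} = \Sigma^{-1}T_G$ is then literally the content of the previous proposition read in $\mathcal{C}^{\mathrm{op}}$ together with Warwick duality — so "the same argument" is quite literal.
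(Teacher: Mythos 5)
Your argument is correct and is exactly what the paper's terse proof (``The same argument computes the Goodwillie coderivative'') intends: replace the $\hocolim \Omega^j F(\Sigma^j -)$ formula by its dual $\holim \Sigma^j F(\Omega^j-)$, use the Adams isomorphism $Y_{hG}\simeq (\mathrm{E}G_+\wedge Y)^G$ in place of $Y^{hG}\simeq F(\mathrm{E}G_+, Y)^G$, pass to the limit, and invoke Warwick duality to land on $\Sigma^{-1}T_G$. Both your detailed dualization and the closing remark (read the previous proposition in $\mathcal{C}^{\mathrm{op}}$) are faithful to the approach the author has in mind.
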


This last observation motivates the excellent
account of stable power operations
given by Glasman-Lawson \cite{glasman-lawson}.

\subsection{The Tate diagonal}\label{ssec:diagonal}

Recall the following result of Nikolaus \cite[Cor. 6.9]{nikolaus}:

\begin{proposition}\label{prop:initial} The forgetful functor
$U: \mathsf{Mod}_k \to \mathsf{Sp}$ is initial amongst exact, lax
symmetric monoidal functors to spectra.
\end{proposition}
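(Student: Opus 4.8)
The plan is to identify the universal property as a consequence of two facts: first, that $\mathsf{Mod}_k$ is the free $\mathbb{E}_\infty$-algebra generated by $k$ in an appropriate sense, and second, that an exact lax symmetric monoidal functor $F\colon \mathsf{Mod}_k \to \mathsf{Sp}$ is determined, up to contractible choice, by where it sends the unit $k$ together with its $\mathbb{E}_\infty$-structure. Concretely, given such an $F$, the object $F(k)$ inherits an $\mathbb{E}_\infty$-ring structure from the lax symmetric monoidal structure, and the lax structure map $\mathbb{1}_{\mathsf{Sp}} = S \to F(k)$ exhibits $F(k)$ as an $\mathbb{E}_\infty$-ring under $S$, i.e. simply as an $\mathbb{E}_\infty$-ring. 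So the first step is to recall (from \cite{nikolaus} or \cite{HA}) that evaluation at the unit gives an equivalence between the $\infty$-category of exact, lax symmetric monoidal functors $\mathsf{Mod}_k \to \mathsf{Sp}$ and the $\infty$-category $\mathsf{CAlg}(\mathsf{Sp})_{/\text{something}}$ — more precisely, that $\mathsf{Mod}_k$ is the universal stable symmetric monoidal $\infty$-category receiving a symmetric monoidal functor from $\mathsf{Sp}$ equipped with a map $S \to (\text{image of the unit})$ refining $k$; this is the statement that $\mathsf{Mod}_k \simeq \mathsf{Mod}_k(\mathsf{Sp})$ is built from $\mathsf{Sp}$ by base change along $S \to k$.

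Granting that, the key step is to show that the forgetful functor $U$ itself corresponds, under evaluation at the unit, to an \emph{initial} object of the relevant category of $\mathbb{E}_\infty$-rings-with-structure. Since $U(k) = k$ as an $\mathbb{E}_\infty$-ring, and the comparison to be made is with exact lax symmetric monoidal functors to $\mathsf{Sp}$, the target category is $\mathsf{CAlg}(\mathsf{Sp})$ itself: an exact lax monoidal $F$ gives the $\mathbb{E}_\infty$-ring $F(k)$ together with the canonical $\mathbb{E}_\infty$-map from $F(\mathbb{1}) = F(k)$... but we also have the structure map $S \to F(k)$ coming from laxness. The point is that there is no further data: $k$ is \emph{not} required to map anywhere specified, only the unit $S$ maps to $F(k)$, and that map is part of the $\mathbb{E}_\infty$-structure on $F(k)$, hence unique. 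So the category of such functors is equivalent to $\mathsf{CAlg}(\mathsf{Sp})$ via $F \mapsto F(k)$, and $U$ corresponds to $k$. Initiality of $U$ then amounts to initiality of $k$ in... no — rather, one shows directly that for any exact lax symmetric monoidal $G\colon \mathsf{Mod}_k \to \mathsf{Sp}$, the space of exact lax symmetric monoidal natural transformations $U \to G$ is contractible, by producing the transformation via the composite $U(M) = M \otimes_k k \to M \otimes_k G(k)$ — wait, that doesn't typecheck either. The honest approach: use that $M \simeq \colim$ of a diagram of shifts of $k$ built from $\mathsf{Mod}_k$-module structure, and that both $U$ and $G$, being exact and preserving the unit up to coherent data, are forced on this diagram; laxness pins down the comparison on $k$ (namely $U(k) = k \to G(k)$ is the lax unit of $G$), exactness extends it, and lax monoidality makes it unique.

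First I would invoke the cited result of Nikolaus to reduce to checking the universal property only on the unit object, i.e. reduce to a statement purely about $\mathbb{E}_\infty$-rings and their maps. Then I would observe that an exact lax symmetric monoidal functor $G\colon \mathsf{Mod}_k \to \mathsf{Sp}$ comes equipped with a canonical $\mathbb{E}_\infty$-map $\eta_G\colon S \to G(k)$ (the lax unit), and that giving a lax symmetric monoidal natural transformation $U \Rightarrow G$ is the same, after unwinding, as giving an $\mathbb{E}_\infty$-map $k = U(k) \to G(k)$ compatible with the unit maps — but since $U(k) = k$ and the diagram $S \to k \to G(k)$ versus $S \to G(k)$ must commute, and $k$ is built from $S$ by a colimit of $k$-modules, this map is uniquely the one induced by $\eta_G$. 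I expect the main obstacle to be bookkeeping the coherences: verifying that the candidate natural transformation is genuinely lax symmetric monoidal, and that the space of such is contractible, requires carefully tracking the symmetric monoidal $\infty$-categorical data rather than just its objects. This is precisely the kind of coherence argument that \cite[Cor. 6.9]{nikolaus} is designed to absorb, so in practice the proof is short: cite Nikolaus, identify the target category of structured functors with $\mathsf{CAlg}(\mathsf{Sp})$ via evaluation at $k$, and note that under this identification $U \mapsto k = \mathbb{1}_{\mathsf{CAlg}(\mathsf{Sp})}$ is the initial object.
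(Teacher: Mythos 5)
The paper does not actually prove this proposition: it quotes it verbatim as \cite[Cor.~6.9]{nikolaus} (the line above the statement reads ``Recall the following result of Nikolaus''). Your proposal, when stripped to its endpoint, arrives at the same action---cite Nikolaus---but the argument you give along the way is not a valid reconstruction of why the result holds, and it contains substantive errors.

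The central problem is the claimed equivalence between exact, lax symmetric monoidal functors $\mathsf{Mod}_k \to \mathsf{Sp}$ and $\mathsf{CAlg}(\mathsf{Sp})$ via $F \mapsto F(k)$. This is false, and your own proposal contains the seed of a refutation: if this were an equivalence of $\infty$-categories, the initial functor would have to be sent to the initial $\mathbb{E}_\infty$-ring, namely the sphere $S$; but $U(k) = k \neq S$. The closing sentence, ``under this identification $U \mapsto k = \mathbb{1}_{\mathsf{CAlg}(\mathsf{Sp})}$ is the initial object,'' compounds the error: $k$ is the unit of $\mathsf{Mod}_k$, not of $\mathsf{CAlg}(\mathsf{Sp})$, and it is certainly not initial among $\mathbb{E}_\infty$-rings. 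The cleaner heuristic is the one you briefly gesture at and then abandon: $U$ is the functor which sends the unit to the unit \emph{of spectra} in the minimal possible way, so any other exact lax symmetric monoidal $G$ receives a transformation from $U$ generated by the lax unit $S \to G(k)$ and extended by exactness. Making that rigorous is precisely the content of Nikolaus's multiplicative Yoneda lemma, which is why the paper (and you, eventually) defer to it.

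There is also a circularity you should watch for: midway through, you write ``First I would invoke the cited result of Nikolaus to reduce to checking the universal property only on the unit object,'' and at the end ``cite Nikolaus.'' But \cite[Cor.~6.9]{nikolaus} \emph{is} the proposition being proved, so you cannot invoke it as an intermediate reduction. What you could legitimately cite as an ingredient is Nikolaus's Theorem 5.1 (the multiplicative Yoneda lemma for Day convolution) together with the fact that $\mathsf{Mod}_k^\omega$ is generated by $k$ under finite colimits and retracts; Corollary 6.9 is then a (nontrivial) deduction, not a restatement. If the intent is to follow the paper, the honest move is simply to state the result as a citation, as the paper does, rather than to attempt a reconstruction resting on an equivalence that does not hold.
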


In the previous section we identified $T_G$ as a Goodwillie derivative.
In particular, $T_G$ is exact. It also has a lax symmetric monoidal
structure, being a composite of lax symmetric monoidal functors.
So we get the following:

\begin{corollary} There is an essentially unique natural transformation
of lax symmetric monoidal functors $U \to UT_G$.
\end{corollary}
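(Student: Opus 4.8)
The plan is to invoke Proposition~\ref{prop:initial} (Nikolaus's result that $U: \mathsf{Mod}_k \to \mathsf{Sp}$ is initial among exact, lax symmetric monoidal functors to spectra), so the only thing to check is that $UT_G$ is both exact and lax symmetric monoidal as a functor $\mathsf{Mod}_k \to \mathsf{Sp}$; initiality of $U$ then immediately produces an essentially unique natural transformation $U \to UT_G$ of such functors. So the work is entirely in verifying these two properties of $UT_G$.

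For exactness: by the Proposition in \S\ref{ssec:goodwillie}, $T_G \simeq P_1\Gamma^G$, and $P_1$ of any reduced functor lands in $\mathsf{Fun}^{\mathrm{ex}}$, so $T_G$ is exact; postcomposing with the (exact) forgetful functor $U$ keeps it exact. For the lax symmetric monoidal structure: $T_G$ is by construction the composite
\[
\mathsf{Mod}_k \xrightarrow{(-)^{\otimes n}} \mathsf{Mod}_k^{hG} \xrightarrow{(-)^{t\mathcal{T}}} \mathsf{Mod}_k,
\]
where $\mathcal{T}$ is the family of non-transitive subgroups of $G$. The functor $X \mapsto X^{\otimes n}$ (with its $G$-action via restriction along $G \subseteq \Sigma_n$) is lax symmetric monoidal — this is the standard symmetric-monoidality of the $n$-fold tensor power, combined with the diagonal $\delta$ of \S\ref{ssec:powers} — and $(-)^{t\mathcal{T}}$ is lax symmetric monoidal by the Proposition in \S\ref{ssec:tate-monoidal}. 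A composite of lax symmetric monoidal functors is lax symmetric monoidal, and the final forgetful functor $\mathsf{Mod}_k \to \mathsf{Sp}$ is symmetric monoidal, so $UT_G$ inherits a lax symmetric monoidal structure.

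The one point that deserves care — and I expect it to be the main (mild) obstacle — is checking that the exact structure and the lax symmetric monoidal structure on $T_G$ are compatible, i.e. that $T_G$ is genuinely an object of the appropriate $\infty$-category of exact lax symmetric monoidal functors on which Proposition~\ref{prop:initial} is a statement about a mapping space. Concretely, one must know that the equivalence $T_G \simeq P_1\Gamma^G$ can be promoted to an equivalence of lax symmetric monoidal functors, or equivalently that the lax symmetric monoidal structure constructed from the composite above is compatible with exactness in the sense required. For this it suffices to observe that $\Gamma^G = ((-)^{\otimes n})^{hG}$ is lax symmetric monoidal (same argument as above, using $(-)^{hG}$ in place of $(-)^{t\mathcal{T}}$), that the natural transformation $\Gamma^G \to T_G$ of \S\ref{ssec:tate-monoidal} is one of lax symmetric monoidal functors, and that $P_1$ preserves the relevant structure by the universal property identifying $T_G \simeq P_1 \Gamma^G$ established in \S\ref{ssec:goodwillie}; since all functors in sight are reduced, there is no ambiguity. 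Once this bookkeeping is in place, Proposition~\ref{prop:initial} applies verbatim and yields the essentially unique natural transformation of lax symmetric monoidal functors $U \to UT_G$.
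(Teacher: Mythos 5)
Your proof is correct and follows essentially the same route as the paper: the corollary is deduced directly from Proposition~\ref{prop:initial} after noting that $T_G$ is exact (being a Goodwillie derivative) and lax symmetric monoidal (being a composite of lax symmetric monoidal functors). The worry in your final paragraph is harmless but unnecessary: exactness is a \emph{property} of the underlying functor of a lax symmetric monoidal functor, not additional structure that might fail to be compatible, so once you know the composite $((-)^{\otimes n})^{t\mathcal{T}}$ is lax symmetric monoidal and that its underlying functor is exact (e.g.\ by identifying it with $P_1\Gamma^G$ as a plain functor), Proposition~\ref{prop:initial} applies with no further bookkeeping.
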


We refer to this map $\Delta_G: M \to T_G(M)$ as the \textbf{Tate diagonal}.

\begin{remark} This is not the same as the Tate diagonal in
\cite{nikolaus-scholze} unless $k=S^0$,
since we use the tensor product in
$\mathsf{Mod}_k$. Of course there is an evident relationship between
the two: the Tate diagonal above is just the composite
	\[
	M \to (M^{\wedge n})^{t\mathcal{T}} \to (M^{\otimes n})^{t\mathcal{T}}.
	\]
\end{remark}
\begin{warning} The Tate diagonal is \emph{not} $k$-linear.
\end{warning}

\section{Power operations}\label{sec:operations}

We now fix a field $k$
of characteristic 2 and let $\mathsf{Mod}_k$ be the
$\infty$-category of $k$-module (spectra). In \S\ref{ssec:3defns}
we serve up power operations three ways, and then verify they
agree in \S\ref{ssec:compare}. In between we verify the first
properties of power operations up to the Cartan formula.
We emphasize that this section does not show off the utility
of the approach via the Tate-valued Frobenius, but we have
included the proofs since they are still pleasant.

\subsection{Three definitions of operations}\label{ssec:3defns}

First we specify the objects on which power operations will act.

\begin{definition} We say that $A \in \mathsf{Mod}_k$ is
\textbf{equipped with a symmetric multiplication} if we have
specified a map $\mathrm{Sym}^2(A) \to A$ of $k$-modules.
Equivalently, if we have specified a map $A^{\otimes 2} \to A$
in $\mathsf{Mod}_k^{h\Sigma_2}$.
\end{definition}

\begin{remark} A $k$-module with a symmetric multiplication
is the same as an object of $\mathscr{C}(2,\infty)$
in the notation of \cite{may}. 
\end{remark}

To give the classical construction of power operations we'll need a computation.

\begin{lemma} For any integer $n$ there is a canonical equivalence
	\[
	\mathrm{Sym}^2(\Sigma^nk) \simeq \Sigma^{2n} k_{h\Sigma_2}.
	\]
\end{lemma}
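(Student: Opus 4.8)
The plan is to reduce the claim to the standard fact that $\mathrm{Sym}^2(\Sigma^n k)$ is the extended power $\Sigma^{n \cdot 2}$-shifted by keeping careful track of the $\Sigma_2$-action on $(\Sigma^n k)^{\otimes 2}$, and then to untwist the resulting twisted smash product using the Thom isomorphism for the (now trivial, mod $2$) orientation of the sign representation. Concretely, write $\Sigma^n k = S^n \otimes_k k$ — really $S^n \otimes k$ in spectra smashed down to $k$-modules — so that $(\Sigma^n k)^{\otimes_k 2} \simeq (S^n \otimes S^n) \otimes k$, and the $\Sigma_2$-action is the diagonal one, namely the swap action on $S^n \otimes S^n = S^{2n}$ combined with the trivial action on $k$. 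The swap action on $S^{2n} = S^{n\rho}$, where $\rho$ is the regular representation of $\Sigma_2$ wait — more precisely $S^n \wedge S^n$ with swap is $S^{n \cdot (1 + \sigma)}$ as a $\Sigma_2$-spectrum, with $\sigma$ the sign representation. Therefore $\mathrm{Sym}^2(\Sigma^n k) = ((S^n \wedge S^n) \otimes k)_{h\Sigma_2} \simeq (S^{n + n\sigma} \otimes k)_{h\Sigma_2} \simeq \Sigma^n (S^{n\sigma} \otimes k)_{h\Sigma_2}$, pulling out the $n$ copies of the trivial representation.

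The remaining point is to identify $(S^{n\sigma} \otimes k)_{h\Sigma_2}$ with $\Sigma^n k_{h\Sigma_2}$. This is exactly a Thom isomorphism: since $k$ has characteristic $2$, the $\Sigma_2$-equivariant sphere bundle associated to $n\sigma$ over $\mathrm{B}\Sigma_2$ is $k$-orientable (all its Stiefel–Whitney-type obstructions vanish $2$-locally because $w(n\sigma) \in \mathrm{H}^*(\mathrm{B}\Sigma_2; \mathbb{F}_2)$ pulls back from a trivial bundle after smashing with $k$ — equivalently, the $\Sigma_2$-action on $k$-homology of $S^{n\sigma}$ is trivial), so there is an equivalence of $k$-module Borel $\Sigma_2$-spectra $S^{n\sigma} \otimes k \simeq \Sigma^n k$ (with, now, the residual action being trivial up to the twist, which is what the Thom iso records). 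Taking homotopy orbits gives $(S^{n\sigma}\otimes k)_{h\Sigma_2} \simeq \Sigma^n k_{h\Sigma_2}$. Combining with the previous paragraph yields $\mathrm{Sym}^2(\Sigma^n k) \simeq \Sigma^{2n} k_{h\Sigma_2}$, as desired. For canonicity one observes the Thom class can be fixed by its restriction to the fiber, or alternatively one works with the Thom spectrum $(\mathrm{B}\Sigma_2)^{n\sigma}$ directly, noting $k \otimes (\mathrm{B}\Sigma_2)^{n\sigma} \simeq \Sigma^n (k \otimes \mathrm{B}\Sigma_{2+})$ via the mod $2$ Thom isomorphism and $k_{h\Sigma_2} \simeq k \otimes \mathrm{B}\Sigma_{2+}$.

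The main obstacle I anticipate is making the Thom isomorphism step both honest and natural in $n$ at the level of $\infty$-categories, rather than just on homotopy groups: one needs the equivalence $S^{n\sigma}\otimes k \simeq \Sigma^n k$ as an equivalence of $k$-modules in $\mathsf{Mod}_k^{h\Sigma_2}$, compatibly as $n$ varies (so that the additivity $Q(t)$ and suspension compatibilities downstream work out). The cleanest route is probably to invoke that, mod $2$, the unit $S \to k$ factors through $MO$ (or at least that $k \otimes S^{V} \simeq \Sigma^{|V|} k$ naturally for any virtual representation $V$ via the $MO$-orientation), which trivializes every Thom spectrum in sight; alternatively one cites the James periodicity / stable triviality of $k \wedge \mathbb{R}P^\infty_{n}$ shifts already implicit in the computation $\pi_* k_{h\Sigma_2} = k\{e_0, e_1, \dots\}$ recorded in \S\ref{ssec:tate-example}. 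Everything else is bookkeeping with suspensions and the identification of the swap action on $S^n \wedge S^n$.
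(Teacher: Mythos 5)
Your proof is correct, but it takes a longer route than the paper's. You decompose $(\Sigma^n k)^{\otimes 2}$ as $S^{n(1+\sigma)} \otimes k$ and then invoke a mod-$2$ Thom isomorphism to untwist the $n\sigma$-twist. The paper instead observes directly that the underlying $k$-module of $(\Sigma^n k)^{\otimes 2}$ is $\Sigma^{2n}k$, and that a Borel $\Sigma_2$-action on $\Sigma^{2n}k$ in $\mathsf{Mod}_k$ is classified by a map of $\mathbb{E}_1$-monoids $\Sigma_2 \to \mathrm{Aut}_{\mathsf{Mod}_k}(\Sigma^{2n}k) \simeq GL_1(k)$; since $k$ is an ordinary field this is the discrete group $k^\times$, and in characteristic $2$ there are no nontrivial square roots of unity, so the action is trivial and $\mathrm{Sym}^2(\Sigma^n k) = (\Sigma^{2n}k)_{h\Sigma_2} = \Sigma^{2n}k_{h\Sigma_2}$. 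Note that the Thom-isomorphism step you are worried about is, after unwinding, exactly the same observation applied to $\Sigma^n k$ instead of $\Sigma^{2n}k$: the Borel action on $S^{n\sigma}\otimes k$ is a $k^\times$-valued $1$-cocycle on $\mathrm{B}\Sigma_2$, trivial for the same characteristic-$2$ reason. So the heavy machinery you contemplate ($MO$-orientability, James periodicity) is not needed; the classification of local systems valued in the \emph{discrete} Picard groupoid of $\mathsf{Mod}_k$ does the whole job at once, and this is both what the paper does and what ultimately underlies your Thom-class argument. Your version does have the small advantage of making the $n\rho = n + n\sigma$ bookkeeping explicit, which can be a useful sanity check, but it introduces an intermediate object and a second trivialization where one direct observation suffices.
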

\begin{proof} The object $(\Sigma^nk)^{\otimes 2} = \Sigma^{nV}k$
in $\mathsf{Mod}_k^{h\Sigma_2}$ corresponds to a map
$\mathrm{B}\Sigma_2 \to \mathsf{Mod}_k^{h\Sigma_2}$ which is
determined by a map 
	\[
	\Sigma_2 \to \mathsf{End}_k(\Sigma^{2n}k)
	\simeq \mathsf{End}_k(k,k) \simeq k.
	\]
of $\mathbb{E}_1$-monoids. The map
factors through the units $k^{\times}$, but $k$ has characteristic
2 and hence no nontrivial square roots of unity. So the action is
trivial and the result follows.
\end{proof}

The following construction is the current standard
definition of power operations.

\begin{construction}[Hands-on power operations]\label{cstr:hands-on}
Let $A$ be
a $k$-module equipped with a symmetric multiplication. Given
$x \in \pi_nA$ and $i\ge n$, define $Q^i(x) \in \pi_{n+i}A$ as
the composite
	\[
	\xymatrix{
	S^{n+i} \ar[rr]^-{\Sigma^{2n}e_{i-n}} && 
	\Sigma^{2n}k_{h\Sigma_2} \simeq
	\mathrm{Sym}^2(\Sigma^nk)
	\ar[rr]^-{\mathrm{Sym}^2(x)}&&
	\mathrm{Sym}^2(A) \ar[r]& A
	}.
	\]
\end{construction}

This has the benefit of generalizing well to power operations
for other cohomology theories, but in the case of mod 2 cohomology
there is a more uniform option. The author learned this next approach
from \cite[\S 2.2]{DAGXIII} and has not found an earlier reference,
but a more recent and detailed account can be found in \cite{glasman-lawson}.

First we need a preliminary observation.
Let $T'_2: \mathsf{Mod}_k \to \mathsf{Mod}_k$ denote the left
Kan extension of the restriction of $T_2$ to the full subcategory
of compact objects. This endomorphism commutes with all colimits
and so (\cite[7.1.2.4]{HA}) there is a bimodule $B$ and an
equivalence $T'_2(M) \simeq B \otimes M$. By evaluating on
$M=k$ we deduce that $B=k^{t\Sigma_2}$ as a left $k$-module.
Notice, by construction, we have a natural map $B \otimes M \to T_2(M)$.

\begin{construction}[Stable power operations]\label{cstr:stable-opns}
Let $A$ be a $k$-module equipped with a symmetric multiplication.
The element $t^{-i-i} \in \pi_{i+1}k^{t\Sigma_2}$ extends to a
\emph{right} module map $\Sigma^{i}k \to \Sigma^{-1}B$.
We now define $Q^i: \Sigma^iA \to A$ as the (non $k$-linear!) composite:
	\[
	\Sigma^iA=\Sigma^ik\otimes A \to
	\Sigma^{-1}B\otimes A \to \Sigma^{-1}T_2(A)
	\to \mathrm{Sym}^2(A) \to A.
	\]
\end{construction}

This construction emphasizes the role of $\Sigma^{-1}k^{t\Sigma_2}$
as acting on $A$, but we can also record this information in
a kind of co-action. For that we first need a computation.

\begin{lemma} For any $k$-module $M$ equipped
with the trivial $\Sigma_2$-action, there is a canonical
equivalence of $\pi_*k^{t\Sigma_2}$-modules
	\[
	\pi_*M^{t\Sigma_2} \simeq M_*(\!(t)\!).
	\]
\end{lemma}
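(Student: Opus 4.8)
The plan is to reduce everything to a by-hand computation of the homotopy fixed points $M^{h\Sigma_2}$ and then invert the Stiefel--Whitney class. First, the lax symmetric monoidal structures on $(-)^{h\Sigma_2}$ and $(-)^{t\Sigma_2}$ (the Proposition of \S\ref{ssec:tate-monoidal}) make $M^{h\Sigma_2}$ and $M^{t\Sigma_2}$ into modules over the $\mathbb{E}_\infty$-rings $k^{h\Sigma_2}$ and $k^{t\Sigma_2}$: since $M$ carries the trivial action we have $M\simeq k\otimes_k M$ in $\mathsf{Mod}_k^{h\Sigma_2}$, and the lax structure map composed with the multiplication $k\otimes_k M\to M$ supplies the action. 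Moreover the natural transformation $(-)^{h\Sigma_2}\to(-)^{t\Sigma_2}$ is one of lax symmetric monoidal functors, so everything is compatible with the ring map $k^{h\Sigma_2}\to k^{t\Sigma_2}$ studied in \S\ref{ssec:tate-example}. Write $t\in\pi_{-1}k^{h\Sigma_2}$ for the Stiefel--Whitney class and also for its (invertible) image in $\pi_{-1}k^{t\Sigma_2}$.

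Next I would compute the pieces directly. For the trivial action, $M^{h\Sigma_2}=\lim_{\mathrm{B}\Sigma_2}M=F(\mathrm{B}\Sigma_{2+},M)=F_k(k\otimes\mathrm{B}\Sigma_{2+},M)$; since $k$ has characteristic $2$ the $k$-module $k\otimes\mathrm{B}\Sigma_{2+}$ has free homotopy $H_*(\mathrm{B}\Sigma_2;k)$ and splits as $\bigoplus_{j\ge0}\Sigma^j k$, so $M^{h\Sigma_2}\simeq\prod_{j\ge0}\Sigma^{-j}M$ and $\pi_*M^{h\Sigma_2}\cong\prod_{j\ge0}M_{*+j}$. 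Unwinding the coalgebra structure on $k\otimes\mathrm{B}\Sigma_{2+}$, which is dual to the polynomial algebra $H^*(\mathrm{B}\Sigma_2;k)=k[t]$, identifies the action of $k\llbracket t\rrbracket=\pi_*k^{h\Sigma_2}$ with the evident one, i.e.\ $\pi_*M^{h\Sigma_2}\cong M_*\llbracket t\rrbracket$ as graded $k\llbracket t\rrbracket$-modules. On the other hand, by the projection formula $M_{h\Sigma_2}=(cM)_{h\Sigma_2}\simeq M\otimes_k k_{h\Sigma_2}$ as $k^{h\Sigma_2}$-modules, and since $t$ acts on $\pi_*k_{h\Sigma_2}=k\{e_0,e_1,\dots\}$ by $e_i\mapsto e_{i-1}$ (with $e_{-1}:=0$), as recorded in \S\ref{ssec:tate-example}, we get $k_{h\Sigma_2}[t^{-1}]\simeq0$ and hence $M_{h\Sigma_2}[t^{-1}]\simeq M\otimes_k(k_{h\Sigma_2}[t^{-1}])\simeq0$.

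Now recall that for the family $\{e\}$ of subgroups of $\Sigma_2$ the spectrum $M^{t\Sigma_2}$ is the cofiber of the trace map $M_{h\Sigma_2}\to M^{h\Sigma_2}$ (the first Example of \S\ref{ssec:tate-definitions}), and this is a cofiber sequence of $k^{h\Sigma_2}$-modules. As $t$ acts invertibly on the $k^{t\Sigma_2}$-module $M^{t\Sigma_2}$, the canonical map factors through the localization $M^{h\Sigma_2}\to M^{h\Sigma_2}[t^{-1}]$; applying $(-)[t^{-1}]$ to the cofiber sequence and using $M_{h\Sigma_2}[t^{-1}]\simeq0$ yields an equivalence $M^{h\Sigma_2}[t^{-1}]\xrightarrow{\ \sim\ }M^{t\Sigma_2}$ of $k^{t\Sigma_2}$-modules. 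Passing to homotopy, and using that inverting a self-map induces the corresponding localization on homotopy groups, we get
\[
\pi_*M^{t\Sigma_2}\cong\big(\pi_*M^{h\Sigma_2}\big)[t^{-1}]\cong M_*\llbracket t\rrbracket[t^{-1}]=M_*(\!(t)\!)
\]
as modules over $k\llbracket t\rrbracket[t^{-1}]=k(\!(t)\!)=\pi_*k^{t\Sigma_2}$, which is the assertion.

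The fussiest step is pinning down the $k^{h\Sigma_2}$-module structures in the second paragraph --- that $t$ acts by the shift on $\pi_*M^{h\Sigma_2}$ and kills the bottom class of $k_{h\Sigma_2}$ --- since this is the one place one must remember the coalgebra (diagonal) structure of $\mathrm{B}\Sigma_{2+}$ rather than just the bare splitting; everything afterwards is formal once $M_{h\Sigma_2}[t^{-1}]\simeq0$ is in hand. One could instead run the argument through Warwick duality, computing $M^{t\Sigma_2}\simeq\Sigma\holim_n M\otimes(\mathbb{R}P^\infty)^{-n\tau}$ degreewise, but keeping $M^{h\Sigma_2}$ in the picture makes the module structure manifest and avoids a $\lim^1$-type discussion.
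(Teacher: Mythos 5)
Your proof is correct and follows the same overall strategy as the paper's --- compute $\pi_*M^{h\Sigma_2}\cong M_*\llbracket t\rrbracket$, then invert $t$ --- but it differs in two ways worth noting. First, for the homotopy fixed points the paper passes through the skeletal filtration of $\mathrm{B}\Sigma_2$, getting $M^{h\Sigma_2}\simeq\holim_j F(\mathrm{sk}_j\mathrm{B}\Sigma_2,k)\otimes M$ with $\pi_*=M_*[t]/t^{j+1}$, and appeals to surjectivity of the transition maps to kill $\lim^1$ in the Milnor sequence. You instead split $k\otimes\mathrm{B}\Sigma_{2+}\simeq\bigoplus_{j\ge0}\Sigma^j k$ (using that $k$ is a field) so that $M^{h\Sigma_2}\simeq\prod_{j\ge0}\Sigma^{-j}M$; this sidesteps the $\lim^1$ discussion at the cost of needing a separate argument (duality with the coalgebra structure on $\mathrm{B}\Sigma_{2+}$) to pin down the $t$-action, which you correctly flag as the fussy point. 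Second, the paper's ``it suffices to prove'' for the reduction to homotopy fixed points is left implicit; you make it precise by invoking the cofiber sequence $M_{h\Sigma_2}\to M^{h\Sigma_2}\to M^{t\Sigma_2}$ (the Example in \S\ref{ssec:tate-definitions}), proving $M_{h\Sigma_2}[t^{-1}]\simeq 0$ via the projection formula and the shift action on $\pi_*k_{h\Sigma_2}$, and then inverting $t$. This extra care is welcome and makes the $k(\!(t)\!)$-module structure of the conclusion manifest rather than tacit.
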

\begin{proof} It suffices to prove $\pi_*M^{h\Sigma_2} \simeq
M_*\llbracket t\rrbracket$. From the skeletal filtration on 
$\mathrm{B}\Sigma_2$ we have
	\[
	M^{h\Sigma_2} \simeq \holim F(\mathrm{sk}_j\mathrm{B}\Sigma_2,
	k) \otimes M
	\]
	and $\pi_*F(\mathrm{sk}_j\mathrm{B}\Sigma_2,
	k) \otimes M = M_*[t]/t^{j+1}$. The transition maps are surjective
	so there is no $\lim^1$ term in the Milnor exact sequence and
	the result follows.
\end{proof}

\begin{construction}[Tate-valued Frobenius]\label{cstr:frobenius}
Let $A$ be a $k$-module equipped with a symmetric multiplication.
Define the \textbf{total power operation} as the composite:
	\[
	Q(t): A \stackrel{\Delta_2}{\longrightarrow} T_2(A)
	= (A^{\otimes 2})^{t\Sigma_2} \to A^{t\Sigma_2}.
	\]
We then define $Q^i : A \to \Sigma^{-i}A$ as the composite
	\[
	A \to A^{t\Sigma_2} \stackrel{t^{-i-1}}{\to} \Sigma^{-i-1}A^{t\Sigma_2}
	\to \Sigma^{-i} A_{h\Sigma_2} \to \Sigma^{-i}A.
	\] 
\end{construction}

In \S\ref{ssec:compare} we will verify that the two definitions
of the endomorphism $Q^i: \Sigma^iA \to A$ coincide and that
each induce the operation $Q^i: \pi_nA \to \pi_{n+i}A$ on homotopy.
For now we will assume this compatibility.

\begin{remark}[Naturality of Frobenius] \label{rmk:naturality} The Tate-valued
Frobenius can be defined for any \emph{spectrum} equipped
with a symmetric multiplication, as the composite
$A \to (A \wedge A)^{t\Sigma_2} \to A^{t\Sigma_2}$. Since
the $k$-module Tate diagonal factors through the spectrum Tate diagonal,
we learn that the Tate valued Frobenius only depends on the underlying
$\mathbb{E}_{\infty}$-ring. In particular, the Tate-valued Frobenius
is natural for maps $A \to B$ of $\mathbb{E}_{\infty}$-rings, independent
of any compatibility with $k$-module structures.
\end{remark}

\subsection{First properties}\label{ssec:properties}

The first properties follow easily from the Tate-valued Frobenius
description, with the exception of the squaring property, which
is most readily seen through the classical definition.

\begin{proposition} The operations $Q^i$ satisfy the following
properties.
	\begin{enumerate}[\upshape(i)]
	\item (Additivity) $Q^i(x+y) = Q^i(x)+Q^i(y)$.
	\item (Suspension) $\Omega Q^i(x) = Q^i(\Omega x)$.
	\item (Squaring) $Q^{|x|}(x) = x^2$.
	\item (Instability) $Q^i(x) = 0$ if $i<|x|$. 
	\item (Action on cohomology) If $A = F(X, k)$
	where $X$ is a pointed space, then $Q^i(x) = 0$
	for $i>0$ and $Q^0(x) = x$.
	\end{enumerate}
\end{proposition}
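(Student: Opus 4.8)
The plan is to establish (i)--(v) from whichever of the three definitions of $Q^i$ is most convenient for the property at hand, using the compatibility between the definitions that is promised (and proved) in \S\ref{ssec:compare}. First, for additivity (i), I would work with the Tate-valued Frobenius description. The total power operation $Q(t)$ is the effect on homotopy of the map $A \xrightarrow{\Delta_2} T_2(A) \to A^{t\Sigma_2}$, and the key input is that $T_2$ is \emph{exact} (this is exactly the content of \S\ref{ssec:goodwillie}, where $T_G$ was identified with a Goodwillie derivative). An exact functor preserves finite coproducts, so $T_2(x+y)$, $T_2(x)$, $T_2(y)$ are related by the addition map on $T_2(A)$; since the Tate diagonal $\Delta_2$ is a natural transformation of exact functors (the corollary in \S\ref{ssec:diagonal}), it is compatible with this additive structure. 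Passing along $T_2(A) \to A^{t\Sigma_2}$ and then applying the $k$-linear maps $t^{-i-1}$, the transfer $A^{t\Sigma_2} \to \Sigma A_{h\Sigma_2}$, and $\Sigma A_{h\Sigma_2} \to \Sigma A$, all of which are additive, yields (i). The one subtlety to flag: $\Delta_2$ is not $k$-linear (the \textbf{Warning} in \S\ref{ssec:diagonal}), but it \emph{is} a map of spectra, hence additive, which is all that is needed.

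For suspension (ii), I would again use the Frobenius definition together with the naturality from Remark~\ref{rmk:naturality}, or else the stable definition in Construction~\ref{cstr:stable-opns}, where $Q^i$ is literally an endomorphism $\Sigma^i A \to A$ built from module maps $\Sigma^i k \to \Sigma^{-1}B$ and the natural map $B \otimes A \to T_2(A)$; naturality of all these constructions in $A$ and commuting with $\Sigma$ gives $\Omega Q^i(x) = Q^i(\Omega x)$ after identifying the relevant shifts. For instability (iv), the cleanest route is Construction~\ref{cstr:hands-on}: there $Q^i(x)$ for $x \in \pi_n A$ factors through $\Sigma^{2n} e_{i-n} \colon S^{n+i} \to \Sigma^{2n} k_{h\Sigma_2}$, and $e_{i-n}$ simply does not exist (is zero) when $i - n < 0$ because $\pi_* k_{h\Sigma_2}$ is concentrated in nonnegative degrees with basis $e_0, e_1, \dots$. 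Hence $Q^i(x) = 0$ for $i < |x|$. Alternatively, from the Tate-valued Frobenius: $Q^i(x)$ involves the composite $A^{t\Sigma_2} \to \Sigma A_{h\Sigma_2} \to \Sigma A$ after multiplying by $t^{-i-1}$, and one tracks that for $i < n$ the relevant class lands in the image of $\pi_* k \llbracket t \rrbracket$, which the transfer-to-$A_{h\Sigma_2}$ map kills by the short exact sequence computed in \S\ref{ssec:tate-example}.

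The squaring property (iii) is, as the authors themselves signal, best seen from Construction~\ref{cstr:hands-on}: when $i = |x| = n$ the map $\Sigma^{2n} e_{i-n} = \Sigma^{2n} e_0$ is the bottom cell inclusion $S^{2n} \to \Sigma^{2n} k_{h\Sigma_2} \simeq \mathrm{Sym}^2(\Sigma^n k)$, which under $\mathrm{Sym}^2(x)$ and the multiplication $\mathrm{Sym}^2(A) \to A$ computes precisely the product $x \cdot x = x^2$ --- this is the standard fact that the bottom cell of the extended square realizes the multiplication, and I would cite \cite{may} or \cite{bmms} for the bookkeeping rather than reprove it. Finally, for (v), the action on cohomology: when $A = F(X, k)$ for a pointed space $X$, the symmetric multiplication is the one induced by the diagonal of $X$, and the claim $Q^i(x) = 0$ for $i > 0$, $Q^0(x) = x$ is the statement that these are the \emph{Steenrod squares} written in Dyer-Lashof indexing --- but here $A$ is a (commutative) \emph{algebra under} $k$ coming from a space, so the ``top'' operation $Q^{|x|}$ is the square and the unstable vanishing forces $Q^i = 0$ for $i$ strictly between... more precisely, for a space one has the extra instability $Q^i x = 0$ for $i > |x|$ is false in general, so the correct argument is that for $A = F(X,k)$ the relevant $\Sigma_2$-spectrum $A^{\otimes 2}$ splits compatibly with the diagonal in a way that makes the composite defining $Q^i$ for $i>0$ factor through a contractible piece; concretely, $F(X,k)^{t\Sigma_2} \simeq F(X, k^{t\Sigma_2})$ with $\Sigma_2$ acting trivially on $X$, and the Tate diagonal for $F(X,k)$ is $F(X, -)$ applied to the Tate diagonal of $k$ composed with the diagonal of $X_+$, which on $\pi_0 k = k$ acts as the identity (Frobenius on a field of characteristic $2$ fixing the prime field, or: $k^{t\Sigma_2}$ in degree $0$ with the Frobenius is the identity since $k$ is an $\mathbb{F}_2$-algebra... for $k = \mathbb{F}_2$ literally the identity). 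Tracking this through $t^{-i-1}$ and the transfer shows only $i = 0$ survives and gives $x$ back. The main obstacle I anticipate is (v): getting the normalization exactly right requires being careful about whether the ``total power operation'' on a space-level $A$ reproduces $Q^0 = \mathrm{id}$ rather than, say, the Frobenius $x \mapsto x^2$ on $\pi_0$; the resolution is that the $i=0$ term of $Q(t)$ picks out the \emph{transfer}-side (homotopy orbit) contribution, not the norm, and on $F(X,k)$ the relevant orbit class in degree $0$ is exactly $x$. I would devote most of the write-up to pinning down this point and treat (i)--(iv) briskly.
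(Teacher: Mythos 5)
For (i)--(iii) you take essentially the same route as the paper: additivity because $Q^i$ is induced by a map of spectra, suspension from exactness of $T_2$ and the Tate diagonal being a transformation of exact functors, and squaring by chasing the bottom cell through Construction~\ref{cstr:hands-on}.

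For instability (iv) you use a genuinely different argument. You read off directly from Construction~\ref{cstr:hands-on} that the class $e_{i-n}$ is zero when $i < n$ since $\pi_*k_{h\Sigma_2}$ is connective. The paper instead deduces instability from (ii) and (iii): desuspend to reduce to the top operation, which is squaring, and the multiplication on a loop object $\Omega B$ is null because $S^1 \to S^1 \wedge S^1$ is. Both work, and yours is arguably more direct, but it carries a small caveat you should flag: the hands-on construction is only stated for $i \ge n$, so ``$e_{i-n}$ is zero for $i < n$'' is really the observation that the comparison diagram in \S\ref{ssec:compare} remains valid for all $i$ and the composite lands in $\pi_{n+i}\Sigma^{2n}k_{h\Sigma_2} = 0$. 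The paper's route avoids having to extend the classical construction past its stated domain, which is why it reduces to (ii) and (iii) instead.

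For (v) there is a real gap. Your argument wanders through an unstated compactness hypothesis (``$F(X,k)^{t\Sigma_2} \simeq F(X, k^{t\Sigma_2})$'' needs $X$ a finite complex or at least a convergence argument), conflates the Tate diagonal of $A$ with the Tate-valued Frobenius (the multiplication has to be applied to get to $A^{t\Sigma_2}$), and in the end you concede you would ``devote most of the write-up to pinning down this point'' without actually doing so. The paper's argument is short: by Remark~\ref{rmk:naturality} the operations are natural for maps of $\mathbb{E}_\infty$-rings, so one may pull back along a classifying map and reduce to $X = K(k,n)$ with $x$ the fundamental class; then $Q^i(\iota)$ for $i > 0$ lies in $\tilde H^{n-i}(K(k,n);k)$, which vanishes for degree reasons, and $Q^0(\iota) = \iota$ follows similarly once one checks the normalization against $H^n(K(k,n)) = k\{\iota\}$. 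You should replace your sketch with this reduction; it resolves the normalization issue you correctly identified as the crux.
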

\begin{proof}
	\begin{enumerate}[(i)]
	\item (Additivity) Since $Q^i$ is induced by
	a map of spectra, it is automatically additive.
	\item (Suspension) The Tate diagonal is
	a natural transformation of exact functors, so
	$\Delta_{2, \Omega A} \simeq \Omega \Delta_2$.
	Exactness of $T_2$ then ensures that
	$\Omega T_2(A) \to T_2(\Omega A)$ is an equivalence,
	and composing with the multiplication on
	$\Omega A$ identifies $\Omega Q(t)$ with the total
	power operation for $\Omega A$, which was to be shown.
	\item (Squaring) Using Construction \ref{cstr:hands-on},
	observe that $Q^{|x|}(x)$ is the image of the bottom
	class in $\mathrm{Sym}^2(\Sigma^nk)$, which is the left
	vertical arrow in the diagram:
		\[
		\xymatrix{
		\Sigma^nk \otimes \Sigma^nk\ar[d]\ar[r]^{x\otimes x} &
		M \otimes M \ar[d]\\
		\mathrm{Sym}^2(\Sigma^nk)\ar[r] & \mathrm{Sym}^2(M)
		}
		\]
	The result follows by chasing the diagram clockwise.
	\item (Instability) By (ii) we may replace $A$ by $\Omega^{|x|-i}A$
	and thereby reduce to the case that $A= \Omega B$ and
	$i = |x|$. By (iii), $Q^ix = x^2$, but the multiplication on
	$\Omega B$ is always trivial, since $S^1 \to S^1 \wedge S^1$
	is null.
	\item (Action on cohomology) By naturality we may replace
	$X$ with $K(k, n)$ and $x$ with the fundamental class.
	Now the result follows for degree reasons.
	\end{enumerate}
\end{proof}

\subsection{Cartan formula}\label{ssec:cartan}

If $A$ and $A'$ are equipped with symmetric multiplications
then $A \otimes A'$ inherits a canonical symmetric multiplication
as well. In this case we have an external Cartan formula:

\begin{proposition}[Cartan formula] 
	\[
	Q(t)(x \otimes y)
	= Q(t)(x) \otimes Q(t)(y) \in (A \otimes A')(\!(t)\!).
	\]
\end{proposition}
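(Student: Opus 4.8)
The plan is to identify the total power operation $Q(t)$ as the effect on homotopy of a map of spectra that is manifestly multiplicative, so that the Cartan formula becomes the statement that a certain diagram of lax symmetric monoidal functors commutes. Recall from Construction \ref{cstr:frobenius} that $Q(t)$ is the composite $A \xrightarrow{\Delta_2} (A^{\otimes 2})^{t\Sigma_2} \to A^{t\Sigma_2}$, and that after the identification $\pi_* A^{t\Sigma_2} \simeq A_*(\!(t)\!)$ of the preceding lemma this agrees with $\sum_i Q^i(\cdot)\, t^i$. So the external Cartan formula will follow once we check that the Tate-valued Frobenius, viewed as a natural transformation $\mathrm{Frob}: U \Rightarrow U((-)^{t\Sigma_2})$ of lax symmetric monoidal functors on $k$-modules equipped with symmetric multiplications, is itself a lax symmetric monoidal transformation — i.e. that $\mathrm{Frob}_{A \otimes A'}$ is compatible with $\mathrm{Frob}_A$ and $\mathrm{Frob}_{A'}$ through the lax structure maps.

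First I would unwind what "compatible with the lax structure" means here. On the source side, the symmetric multiplication on $A \otimes A'$ is built from those on $A$ and $A'$ together with the symmetry of $\otimes$; the relevant lax structure map is just the identity-like comparison $U(A) \otimes U(A') \to U(A \otimes A')$. On the target side, $(-)^{t\Sigma_2}$ carries a lax symmetric monoidal structure by Proposition~\ref{ssec:tate-monoidal} (the Tate construction is lax symmetric monoidal, initial among such), giving a map $A^{t\Sigma_2} \otimes (A')^{t\Sigma_2} \to (A \otimes A')^{t\Sigma_2}$. The content of the Cartan formula is that the square
\[
\xymatrix{
A \otimes A' \ar[r]^-{\Delta_2 \otimes \Delta_2} \ar[d]_{\Delta_2} &
(A^{\otimes 2})^{t\Sigma_2} \otimes ((A')^{\otimes 2})^{t\Sigma_2} \ar[r] \ar[d] &
A^{t\Sigma_2} \otimes (A')^{t\Sigma_2} \ar[d]\\
((A \otimes A')^{\otimes 2})^{t\Sigma_2} \ar@{=}[r] &
((A \otimes A')^{\otimes 2})^{t\Sigma_2} \ar[r] &
(A \otimes A')^{t\Sigma_2}
}
\]
commutes. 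The left square is exactly the assertion that the Tate diagonal $\Delta_2$ is a lax symmetric monoidal natural transformation, which holds because $\Delta_G$ was constructed in the Corollary of \S\ref{ssec:diagonal} as the \emph{essentially unique} lax symmetric monoidal transformation $U \to U T_G$ — in particular it is lax symmetric monoidal, and uniqueness forces the two composites $U \otimes U \to U T_2$ obtained by going around the square to agree (both are lax symmetric monoidal transformations out of $U \otimes U$, hence determined by their restriction along the unit, again using Proposition~\ref{prop:initial} / Nikolaus's result). The right square commutes because it is built entirely from the lax symmetric monoidal structure on $(-)^{t\Sigma_2}$ applied to the multiplication maps $A^{\otimes 2} \to A$, $(A')^{\otimes 2} \to A'$ and the shuffle identification $(A \otimes A')^{\otimes 2} \simeq A^{\otimes 2} \otimes (A')^{\otimes 2}$ in $\mathsf{Mod}_k^{h\Sigma_2}$ — naturality of the lax structure maps does the rest.

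Passing to homotopy, the composite around the square is $Q(t)$ for $A \otimes A'$ applied to $x \otimes y$, while the other composite is the image of $Q(t)(x) \otimes Q(t)(y)$ under the ring map $A^{t\Sigma_2} \otimes (A')^{t\Sigma_2} \to (A \otimes A')^{t\Sigma_2}$; under the identifications $\pi_* A^{t\Sigma_2} \simeq A_*(\!(t)\!)$ etc. this map sends $t \mapsto t$ on both factors, so the target is $(A \otimes A')_*(\!(t)\!)$ and the formula reads exactly $Q(t)(x \otimes y) = Q(t)(x) \otimes Q(t)(y)$. I expect the main obstacle to be purely bookkeeping: checking that the identification of $\pi_*((-)^{t\Sigma_2})$ with Laurent series from the earlier lemma is itself compatible with the lax monoidal structure maps, i.e. that the variable $t$ "matches up" across the tensor product rather than, say, one copy being sent to $t$ and another to a unit multiple of $t$. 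This follows from the description of $t$ as the Stiefel–Whitney class pulled back along $\mathrm{B}\Sigma_2 \to \mathrm{B}\Sigma_2 \times \mathrm{B}\Sigma_2$, the diagonal, but making this precise is the one spot requiring care.
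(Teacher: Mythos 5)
Your proof is correct and follows essentially the same route as the paper: both reduce the Cartan formula to the commutativity of a two-square diagram whose left square expresses that the Tate diagonal is a transformation of lax symmetric monoidal functors, and whose right square follows from naturality of the lax structure map $(-)^{t\Sigma_2}\otimes(-)^{t\Sigma_2}\to(-\otimes-)^{t\Sigma_2}$ applied to the multiplications. Your closing worry about the Laurent-series variable $t$ matching across the tensor factors is a real but routine check (it comes from the unit of the lax structure on $(-)^{t\Sigma_2}$), and the paper does not dwell on it either.
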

\begin{proof} The formula is equivalent to commutativity of the square:
	\[
	\xymatrix{
	A\otimes A'\ar@{=}[d]\ar[r]& T_2(A) \otimes T_2(A') \ar[r]\ar[d] &
	A^{t\Sigma_2} \otimes A^{t\Sigma_2}\ar[d]\\
	A\otimes A' \ar[r]& T_2(A \otimes A') \ar[r] & (A \otimes A')^{t\Sigma_2}
	}
	\]
	The left square commutes because the Tate diagonal is
	a transformation of lax symmetric monoidal functors. The
	right hand square commutes by naturality of the lax structure
	map \[(-)^{t\Sigma_2} \otimes (-)^{t\Sigma_2}
	\to (-\otimes-)^{t\Sigma_2}\] applied to
	$(A\otimes A')^{\otimes 2}\simeq A^{\otimes 2} \otimes
	A^{'\otimes 2} \to A \otimes A'$.
\end{proof}

\begin{corollary} $Q^n(x\otimes y) = \sum_{i+j=n} Q^i(x)\otimes Q^j(y)$.
\end{corollary}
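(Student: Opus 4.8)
The plan is to read the internal Cartan formula off from the external one (the preceding Proposition) by passing to homotopy and extracting the coefficient of $t^n$. By Construction~\ref{cstr:frobenius}, together with the residue description of the projection $k^{t\Sigma_2}\to\Sigma k$ in \S\ref{ssec:tate-example}, the total power operation has the effect
\[
Q(t)(z)\;=\;\sum_i Q^i(z)\,t^i\;\in\;A_*(\!(t)\!)=\pi_*A^{t\Sigma_2}
\]
on a class $z\in\pi_mA$, with $Q^i(z)\in\pi_{m+i}A$; indeed $Q^i(z)$ is exactly the residue of $t^{-i-1}Q(t)(z)$.

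First I would apply the external Cartan formula to classes $x\in\pi_mA$ and $y\in\pi_{m'}A'$, obtaining $Q(t)(x\otimes y)=Q(t)(x)\otimes Q(t)(y)$, where the right-hand side denotes the image of $Q(t)(x)\otimes Q(t)(y)$ under the composite
\[
\pi_*A^{t\Sigma_2}\otimes_{\pi_*k}\pi_*A'^{t\Sigma_2}\longrightarrow\pi_*\!\left(A^{t\Sigma_2}\otimes_k A'^{t\Sigma_2}\right)\longrightarrow\pi_*\!\left(A\otimes_k A'\right)^{t\Sigma_2},
\]
the second arrow being induced by the lax symmetric monoidal structure map. Next I would identify this composite with ordinary multiplication of Laurent series. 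On the one hand it is $\pi_*k^{t\Sigma_2}=k_*(\!(t)\!)$-linear in each variable, since the lax structure map is a map of $k^{t\Sigma_2}$-modules; in particular $t^i\otimes t^j\mapsto t^{i+j}$. On the other hand the unit map $\mathrm{id}\Rightarrow(-)^{t\Sigma_2}$ (for the trivial action) is a transformation of lax symmetric monoidal functors --- this is the very compatibility that appeared, as the left-hand square, in the proof of the external formula --- so it sends $a\otimes b\mapsto a\otimes b$ for $a\in\pi_*A$, $b\in\pi_*A'$, where on the right $a\otimes b$ denotes the image of $a\otimes b$ in $\pi_*(A\otimes_k A')$. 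Since $\mathrm{char}\,k=2$ there are no signs to track, and therefore
\[
Q(t)(x)\otimes Q(t)(y)\;=\;\Big(\sum_i Q^i(x)\,t^i\Big)\Big(\sum_j Q^j(y)\,t^j\Big)\;=\;\sum_n\Big(\sum_{i+j=n}Q^i(x)\otimes Q^j(y)\Big)t^n.
\]
Comparing this with $Q(t)(x\otimes y)=\sum_nQ^n(x\otimes y)\,t^n$ and reading off the coefficient of $t^n$ yields the corollary.

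The only step that needs any care is the identification of the lax structure map with Laurent-series multiplication on homotopy, and this is immediate from the two observations just made ($k^{t\Sigma_2}$-linearity, and compatibility of the unit map $\mathrm{id}\Rightarrow(-)^{t\Sigma_2}$ with the lax structures). I do not expect any genuine obstacle here: the whole content lives in the preceding Proposition, and this corollary is merely its effect on homotopy groups.
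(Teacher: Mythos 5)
Your argument is correct and is precisely the (omitted) argument the paper has in mind: the Proposition asserts the identity of Laurent series $Q(t)(x\otimes y)=Q(t)(x)\otimes Q(t)(y)$ in $(A\otimes A')_*(\!(t)\!)$, and the corollary is just its coefficient of $t^n$. The care you take in identifying the lax structure map with multiplication of Laurent series (via $k^{t\Sigma_2}$-linearity and compatibility of the unit $\mathrm{id}\Rightarrow(-)^{t\Sigma_2}$ with the lax monoidal structures) is more explicit than what the paper leaves implicit, but it is the right justification and matches the conventions set up in \S\ref{ssec:tate-example} and \S\ref{ssec:cartan}.
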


As a corollary of the proof, we see:

\begin{corollary} If $A \otimes A \to A$ is a map of objects equipped
with symmetric multiplications, then $Q(t): A \to A^{t\Sigma_2}$
is also a map of objects equipped with symmetric multiplications.
\end{corollary}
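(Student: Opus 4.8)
The final corollary claims: if $m\colon A\otimes A\to A$ is a map of objects equipped with symmetric multiplications (i.e.\ $m$ is $\Sigma_2$-equivariant for the given multiplications on source and target), then $Q(t)\colon A\to A^{t\Sigma_2}$ is also such a map. Here the symmetric multiplication on $A^{t\Sigma_2}$ is the one induced from that on $A$ via the lax monoidal structure on $(-)^{t\Sigma_2}$, composed with the multiplication $A^{\otimes 2}\to A$; so what must be shown is that the square
\[
\xymatrix{
A\otimes A\ar[r]^-{Q(t)\otimes Q(t)}\ar[d]_-{m} &
A^{t\Sigma_2}\otimes A^{t\Sigma_2}\ar[d]^-{m^{t\Sigma_2}\circ(\text{lax})}\\
A\ar[r]^-{Q(t)} & A^{t\Sigma_2}
}
\]
commutes in $\mathsf{Mod}_k$.

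**The plan.** The key observation is that this is essentially the content of the Cartan formula proposition together with its proof, specialized to $A'=A$ and post-composed with $m$. First I would recall that the Cartan proposition already establishes commutativity of the big rectangle
\[
\xymatrix{
A\otimes A\ar@{=}[d]\ar[r] & T_2(A)\otimes T_2(A)\ar[r]\ar[d] & A^{t\Sigma_2}\otimes A^{t\Sigma_2}\ar[d]\\
A\otimes A\ar[r] & T_2(A\otimes A)\ar[r] & (A\otimes A)^{t\Sigma_2}
}
\]
where the vertical maps on the right are built from the lax monoidal structures of $T_2$ and of $(-)^{t\Sigma_2}$. The horizontal composites are the total power operations of $A$ (top) and of $A\otimes A$ (bottom). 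Now I would glue to the right edge of this rectangle the naturality square for $Q(t)$ applied to the map of objects-with-symmetric-multiplication $m\colon A\otimes A\to A$: since $Q(t)$ is defined as the composite $\Delta_2$ followed by the multiplication-induced map to $(-)^{t\Sigma_2}$, and $\Delta_2$ is a natural transformation of lax symmetric monoidal functors while $m$ is a map in $\mathsf{Mod}_k^{h\Sigma_2}$ compatible with the symmetric multiplications, the square
\[
\xymatrix{
A\otimes A\ar[r]^-{Q(t)_{A\otimes A}}\ar[d]_-{m} & (A\otimes A)^{t\Sigma_2}\ar[d]^-{m^{t\Sigma_2}}\\
A\ar[r]^-{Q(t)_A} & A^{t\Sigma_2}
}
\]
commutes. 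Pasting the two diagrams along their shared edge $(A\otimes A)^{t\Sigma_2}$ yields the desired square, once I check that the composite of the right-hand vertical maps — $T_2(A)\otimes T_2(A)\to A^{t\Sigma_2}\otimes A^{t\Sigma_2}$, then the lax map to $(A\otimes A)^{t\Sigma_2}$, then $m^{t\Sigma_2}$ — agrees with the map $m^{t\Sigma_2}\circ(\text{lax})$ appearing in the statement. That is just unwinding the definition of the symmetric multiplication on $A^{t\Sigma_2}$, which is precisely $m^{t\Sigma_2}$ composed with the lax structure map $(-)^{t\Sigma_2}\otimes(-)^{t\Sigma_2}\to(-\otimes-)^{t\Sigma_2}$.

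**The main obstacle.** The only genuinely nontrivial point is the bookkeeping of which $\Sigma_2$-actions and which lax structures are in play — there are two levels of lax monoidal structure (that of $T_2$ as an exact lax symmetric monoidal endofunctor, and that of $(-)^{t\Sigma_2}$) and one must be sure that the composite map $T_2(-)\otimes T_2(-)\to(-\otimes-)^{t\Sigma_2}$ used in the Cartan proof is compatible with the lax structure map $(-)^{t\Sigma_2}\otimes(-)^{t\Sigma_2}\to(-\otimes-)^{t\Sigma_2}$ used in the statement of the symmetric multiplication on the Tate construction. This compatibility is exactly the content of the second half of the Cartan proof (``the right hand square commutes by naturality of the lax structure map'' applied to $(A\otimes A')^{\otimes 2}\simeq A^{\otimes 2}\otimes A'^{\otimes 2}\to A\otimes A'$), so in the write-up I expect to simply say ``this is a corollary of the proof of the Cartan formula'' — indeed the paper already flags it this way — and spell out the single extra pasting step with $m^{t\Sigma_2}$. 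No new homotopy-coherence input is needed beyond what the Cartan proposition and the functoriality of $\Delta_2$ already provide.
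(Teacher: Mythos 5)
Your proposal is correct and follows precisely the route the paper intends: the paper simply remarks ``as a corollary of the proof'' after the Cartan formula, and your argument spells out what that means — paste the Cartan rectangle (specialized to $A'=A$) with the naturality square for $Q(t)$ applied to the map $m\colon A\otimes A\to A$ of objects with symmetric multiplications, and observe that the right-hand vertical composite is exactly the induced symmetric multiplication on $A^{t\Sigma_2}$. Your treatment is a faithful and slightly more explicit unwinding of the paper's one-line justification.
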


\subsection{An example}\label{ssec:opns-example}

We revisit our example $k^{t\Sigma_2}$, but to avoid confusion
we change the name of the generator: $k^{t\Sigma_2}_*=
k(\!(s)\!)$. From the equivalence 
$k^{h\Sigma_2} = F(\mathrm{B}\Sigma_{2+}, k)$ together
with properties (iii), (iv), and (v), we see that
	\[
	Q(t)(s) = s + s^2t^{-1}.
	\]
The Cartan formula now determines the behavior of $Q(t)$
in general:
	\[
	Q(t)\sum_i a_i s^i = \sum_i a_i (s+s^2t^{-1})^i.
	\]

\subsection{Comparing the definitions}\label{ssec:compare}

Let $B$ denote the bimodule from Construction \ref{cstr:stable-opns},
which is equivalent to $k^{t\Sigma_2}$ as a left $k$-module.
Let $k \to B$ extend $1 \in \pi_0k^{t\Sigma_2}$ as a right module map.

\begin{lemma} The composite
	\[
	A \to B \otimes A \to T_2(A)
	\]
above is equivalent to the Tate diagonal $\Delta_2$.
\end{lemma}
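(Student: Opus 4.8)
The plan is to use the universal property of the Tate diagonal established via Proposition~\ref{prop:initial}: the Tate diagonal $\Delta_2 \colon U \to U T_2$ is the \emph{essentially unique} natural transformation of exact, lax symmetric monoidal functors $\mathsf{Mod}_k \to \mathsf{Sp}$. So it suffices to check that the composite $\alpha \colon A \to B \otimes A \to T_2(A)$ is itself a natural transformation of exact, lax symmetric monoidal functors; then uniqueness forces $\alpha \simeq \Delta_2$.

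First I would verify exactness. The functor $A \mapsto B \otimes A$ is exact (tensoring with a fixed module over a stable $\infty$-category preserves fibre sequences), and $T_2$ is exact by the identification with a Goodwillie derivative in \S\ref{ssec:goodwillie}. The second map $B \otimes A \to T_2(A)$ is the natural map $T'_2(A) \to T_2(A)$ coming from the fact that $T'_2$ is the left Kan extension of the restriction of $T_2$ to compact objects; both sides are exact and the transformation is exact. Hence $\alpha$ is a natural transformation of exact functors. Second, I would check the lax symmetric monoidal structure. Here the right module map $k \to B$ picking out $1 \in \pi_0 k^{t\Sigma_2}$ is the key input: it is the lax unit, since $B \simeq k^{t\Sigma_2}$ receives the lax monoidal unit map from $k^{h\Sigma_2}$, and by the Proposition of \S\ref{ssec:tate-monoidal} the transformation $(-)^{h\Sigma_2} \to (-)^{t\mathcal{T}}$ is lax symmetric monoidal, so $1$ is the correct unit. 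The composite $A \to B \otimes A$ is then obtained by tensoring the unit map $k \to B$ with $A$, which is a lax symmetric monoidal transformation from $U$ to $U(B \otimes -)$; and $B \otimes - \to T_2(-)$ is lax symmetric monoidal because it is the canonical comparison from the "pointwise tensor" approximation, which inherits the lax structure from $T_2$ (equivalently, from the lax monoidal structure on $(-)^{\otimes 2}$ followed by $(-)^{t\mathcal{T}}$, using \cite[I.3.6]{nikolaus-scholze}).

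The main obstacle is making the second point precise: I must pin down the lax symmetric monoidal structure on the comparison $B \otimes (-) \to T_2(-)$ and confirm that, under the identification $T'_2 \simeq B \otimes (-)$ of \cite[7.1.2.4]{HA}, the lax unit really is the chosen right module map $k \to B$ and not some twisted version. The cleanest route is to observe that the diagram
\[
\xymatrix{
U \ar[r] \ar[rd] & U T'_2 \ar[d]\\
& U T_2
}
\]
of exact functors is one of \emph{lax symmetric monoidal} transformations, since $T'_2 \to T_2$ is an equivalence on compact objects (in particular on $k$), so it is determined on the unit; and $T'_2$, being left Kan extended from a symmetric monoidal subcategory inclusion, carries a lax symmetric monoidal structure for which the unit transformation $U \to U T'_2$ corresponds exactly to $k \to T'_2(k) = B$. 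Granting this, $\alpha = (U \to U T'_2 \to U T_2)$ is a lax symmetric monoidal transformation of exact functors, and Proposition~\ref{prop:initial} identifies it with $\Delta_2$. I would close by remarking that this is precisely the factorization $M \to B\otimes M \to T_2(M)$ alluded to just before Construction~\ref{cstr:stable-opns}, now upgraded to the statement that the composite \emph{is} the Tate diagonal.
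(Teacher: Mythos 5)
Your strategy is genuinely different from the paper's, and the gap you flagged as the ``main obstacle'' is real and not fully closed by the sketch you gave. Let me make both points precise.

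You appeal to Proposition~\ref{prop:initial} in the following form: $\Delta_2$ is the \emph{unique} lax symmetric monoidal natural transformation $U \to UT_2$ of exact functors, so it suffices to endow the composite $A \to B\otimes A \to T_2(A)$ with such a structure. This is a legitimate plan, but executing it requires (a) exhibiting a lax symmetric monoidal structure on $T'_2$ for which $T'_2 \to T_2$ is a lax monoidal transformation, (b) identifying the lax unit $k \to T'_2(k) = B$ with the specific map chosen in Construction~\ref{cstr:stable-opns} (the right module map extending $1 \in \pi_0 k^{t\Sigma_2}$), and (c) transporting all of this across the bimodule identification $T'_2 \simeq B \otimes (-)$ of \cite[7.1.2.4]{HA}. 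Your argument for (a) --- that $T'_2$ is a left Kan extension along the inclusion of a symmetric monoidal subcategory, hence inherits a lax structure --- is plausible but not automatic: operadic Kan extension producing a \emph{lax} (rather than oplax) structure compatibly with the counit $T'_2 \to T_2$ is a nontrivial Day-convolution assertion that would need its own justification, and (b) and (c) are additional checks you are implicitly asserting rather than proving. Crucially, if you actually carry out (b), you will find yourself redoing precisely the computation the paper performs, so the detour through lax monoidality of $T'_2$ buys you nothing.

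The paper sidesteps (a)--(c) entirely. It observes that $\Omega^\infty U$ is corepresented by $k$, so by the universal property of spectra \cite[1.4.2.23]{HA} and the Yoneda lemma,
\[
\mathrm{Map}_{\mathsf{Fun}^{\mathrm{ex}}(\mathsf{Mod}_k,\mathsf{Sp})}(U, UT_2) \simeq \Omega^{\infty} k^{t\Sigma_2},
\]
so an \emph{exact} natural transformation $U \to UT_2$ (no lax monoidality required) is determined by its value at $k$, which is a point of $\Omega^\infty k^{t\Sigma_2}$. The lax symmetric monoidality of the Tate diagonal is then used only once, to pin down that $\Delta_2$ evaluates at $k$ to the unit $1 \in \pi_0 k^{t\Sigma_2}$; and the composite $A \to B\otimes A \to T_2(A)$ evaluates to $1$ at $k$ by the defining choice of $k \to B$. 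Both correspond to $1 \in \pi_0 k^{t\Sigma_2}$ under the equivalence above, hence coincide. This is more economical: it asks you to compute a single homotopy group rather than to verify a coherent lax monoidal structure. If you want to rescue your version, the honest way to establish (b) is essentially to prove the corepresentability statement anyway, so I would recommend the paper's route.
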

\begin{proof} Indeed, first observe that
by the universal property of spectra \cite[1.4.2.23]{HA}, we have
	\[
	\Omega^{\infty}:
	\mathsf{Fun}^{\mathrm{ex}}(\mathsf{Mod}_k, \mathsf{Sp})
	\stackrel{\simeq}{\longrightarrow}
	\mathsf{Fun}^{\mathrm{lex}}(\mathsf{Mod}_k,
	\mathsf{Spaces}).
	\]
Now let $U: \mathsf{Mod}_k \to \mathsf{Sp}$ be the forgetful
functor. Then $\Omega^{\infty}U$ is corepresented by $k$, so the
Yoneda lemma applied to the previous observation implies that
	\[
	\mathrm{Map}_{\mathsf{Fun}^{\mathrm{ex}}(\mathsf{Mod}_k, 
	\mathsf{Sp})}(U, UT_2) \simeq \Omega^{\infty}k^{t\Sigma_2}.
	\]
Since the Tate diagonal is a transformation of lax symmetric monoidal
functors, the
transformation $U \to UT_2$ evaluates on $k$ to the unit
$k \to k^{t\Sigma_2}$. Combining this with the previous observation
we learn that the Tate diagonal is the unique transformation
$U \to UT$ which corresponds to the element $1 \in \pi_0k^{t\Sigma_2}$.
This completes the proof.
\end{proof}

Thus the map 
	\[
	A \to B\otimes A \to T_2(A) \to A^{t\Sigma_2}
	\]
coincides with the Tate valued Frobenius. Now observe that
the last three terms are left modules over $k^{t\Sigma_2}$,
so multiplication by $t^{-i-1}$ and
naturality of $(-)^{t\Sigma_2} \to \Sigma (-)_{h\Sigma_2}$
gives a commutative diagram:
	\[
	\xymatrix{
	A\ar[r]\ar[dr]
	&B\otimes A \ar[r] \ar[d]& T_2(A)\ar[r]\ar[d]& A^{t\Sigma_2}\ar[d]\\
	&\Sigma^{-i-1}B\otimes A \ar[r]& \Sigma^{-i-1}T_2(A)\ar[r]\ar[d]
	&\Sigma^{-i-1}A^{t\Sigma_2}\ar[d]\\
	&&\Sigma^{-i}\mathrm{Sym}^2(A)\ar[r] & \Sigma^{-i}A
	}
	\]
Chasing the diagram around clockwise gives the definition of $Q^i$
in terms of the total power operation. Chasing the diagram around
clockwise gives the definition of $Q^i$ in terms of Construction
\ref{cstr:stable-opns}. So these two constructions agree.

Now we compare with the classical construction. The equivalence
$(\Sigma^{n}k)^{\otimes 2} \simeq \Sigma^{2n}k$ in
$\mathsf{Mod}_k^{h\Sigma_2}$ gives a commutative diagram
	\[
	\xymatrix{
	\Sigma^{-1}T_2(\Sigma^nk) \ar[r]\ar[d]^{\simeq} & 
	\mathrm{Sym}^2(\Sigma^nk)\ar[d]^{\simeq}\\
	\Sigma^{2n-1}k^{t\Sigma_2} \ar[r] & \Sigma^{2n} k_{h\Sigma_2}
	}
	\]
Since the bottom horizontal map is surjective on homotopy, so is the top,
and we see that $\Sigma^{2n}e_{i-n}$ on the lower right corresponds
to $t^{-i-1}y$ on the top left, where $y \in \pi_{n}\Sigma^nk$ is the generator.
Now let $x: S^{n} \to A$ be a class and form the diagram:
	\[
	\xymatrix{
	S^{i+n}\ar[rr]^{t^{-i-1}y}\ar[drr]_{t^{-i-1}x}
	&&\Sigma^{-1}T_2(\Sigma^nk) \ar[r] & 
	\mathrm{Sym}^2(\Sigma^nk)\ar[d]\\
	&&\Sigma^{-1}T_2(A) \ar[r] & \mathrm{Sym}^2(A)
	}
	\]
Traversing clockwise gives $Q^i(x)$ as in Construction \ref{cstr:hands-on}
and traversing counterclockwise gives the image of $x$ under $Q^i$
as in Construction \ref{cstr:stable-opns}, and this completes the argument.

\section{Adem relations}\label{sec:relations}

The Adem relations arise from relating the iterated
total power operation to a total fourth power operation.
In \S\ref{ssec:iterate} we first explain how to lift the
iterated total power operation to an intermediate Tate spectrum.
In \S\ref{ssec:adem-objects} we show that the existence
of extra symmetry on iterated multiplication allows us to factor
further through a total fourth power operation. This implies
a version of the Adem relations as an identity between formal
Laurent series in two variables, and in \S\ref{ssec:residues}
we essentially perform the maneuver from \cite{bullett-macdonald}
to recover the usual Adem relations.

For notational ease we adopt the following convention
in this section:

\begin{convention} If $G \subseteq \Sigma_n$ is a subgroup,
and $\mathcal{T}$ denotes the family of non-transitive subgroups of $G$,
then we denote $(-)^{t\mathcal{T}}$ by $(-)^{\tau G}$. 
\end{convention}

\subsection{Iterated power operations}\label{ssec:iterate}

Suppose $A$ is a $k$-module equipped with a symmetric
multiplication. Iterating the multiplication gives a map
	\[
	A^{\otimes 4} \to A
	\]
which need not admit an $\Sigma_4$-equivariant structure. However,
it can be made $\Sigma_2 \wr \Sigma_2$-equivariant, so we may
define a map:
	\[
	A \to T_{\Sigma_2\wr\Sigma_2}(A) \to A^{\tau \Sigma_2\wr\Sigma_2}.
	\]
Our first goal is to show that this lifts the iterated total power operation.
\begin{proposition}\label{prop:iterate} Let $A$ be a $k$-module equipped with 
a symmetric multiplication. Then there is a canonical commutative
diagram:
	\[
	\xymatrix{
	&&A^{\tau\Sigma_2\wr\Sigma_2}\ar[d]\\
	A \ar[rr]_-{Q(t) \circ Q(s)}\ar[urr] && (A^{t\Sigma_2})^{t\Sigma_2}
	}
	\]
\end{proposition}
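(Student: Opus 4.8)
The plan is to identify both composites $A \to (A^{t\Sigma_2})^{t\Sigma_2}$ — the one factoring through $A^{\tau\Sigma_2\wr\Sigma_2}$, and the one given by $Q(t)\circ Q(s)$ — as natural transformations of exact, lax symmetric monoidal functors $U \to U\,T_2 T_2$ (where $T_2T_2(A) = ((A^{\otimes 2})^{\otimes 2})^{t\Sigma_2\wr\Sigma_2}$ with the obvious iterated structure), and then invoke the universal property of the Tate diagonal (\Cref{prop:initial} and its corollary) to conclude that any two such transformations agree. The point is that $Q(t)\circ Q(s)$ is visibly such a transformation — it is a composite of the Tate diagonal with itself and with lax-monoidal structure maps — while the map through $A^{\tau\Sigma_2\wr\Sigma_2}$ needs a little unpacking to be recognized as one.

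First I would set up the relevant functors carefully. The functor $T_{\Sigma_2\wr\Sigma_2} = ((-)^{\otimes 4})^{\tau\Sigma_2\wr\Sigma_2}$ is exact (it is a Goodwillie derivative, by the proposition of \S\ref{ssec:goodwillie}) and lax symmetric monoidal (a composite of such). Crucially, the wreath product structure gives a factorization of the $\Sigma_2\wr\Sigma_2$-action on $A^{\otimes 4}$ through the "block" $\Sigma_2$-action on $(A^{\otimes 2})^{\otimes 2}$, and taking Tate constructions in stages — first the outer $\Sigma_2$ with family the trivial subgroup, landing in modules over $k^{t\Sigma_2}$, and then... — one gets a natural transformation $T_{\Sigma_2\wr\Sigma_2} \to (T_2(-))^{t\Sigma_2} = (((-)^{\otimes 2})^{t\Sigma_2})^{\otimes 2})^{t\Sigma_2}$ hmm, I need to be careful here; the composite $T_2\circ T_2$ applied to $A$ means apply $T_2$ to the $k$-module $T_2(A)$, and the content is that the iterated Tate construction for $\Sigma_2\wr\Sigma_2$ (with its family of non-transitive subgroups) maps to the two-stage Tate construction. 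This staged comparison is exactly the kind of map produced by the universal property in \S\ref{ssec:tate-monoidal}: restricting the family enlarges what we quotient by, giving $(-)^{\tau\Sigma_2\wr\Sigma_2} \to (-)^{t(\text{outer }\Sigma_2)}$ applied Borel-fixed-point-wise to the inner $\Sigma_2$, and iterating. So the right-hand vertical map $A^{\tau\Sigma_2\wr\Sigma_2} \to (A^{t\Sigma_2})^{t\Sigma_2}$ factors the clockwise composite.

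Next I would observe that the composite $A \xrightarrow{\Delta_{\Sigma_2\wr\Sigma_2}} T_{\Sigma_2\wr\Sigma_2}(A) \to A^{\tau\Sigma_2\wr\Sigma_2} \to (A^{t\Sigma_2})^{t\Sigma_2}$, after this staged identification, becomes a natural transformation of exact lax symmetric monoidal functors $U \to U\,T_2T_2$ — here I use that $\Delta_{\Sigma_2\wr\Sigma_2}$ is the essentially unique such transformation $U \to UT_{\Sigma_2\wr\Sigma_2}$ (by the Corollary in \S\ref{ssec:diagonal}), composed with a transformation of such functors, hence is itself one. On the other side, $Q(t)\circ Q(s)$ unwinds to $A \xrightarrow{\Delta_2} (A^{\otimes 2})^{t\Sigma_2} \to A^{t\Sigma_2} \xrightarrow{\Delta_2} ((A^{t\Sigma_2})^{\otimes 2})^{t\Sigma_2} \to (A^{t\Sigma_2})^{t\Sigma_2}$, which is manifestly a transformation of exact lax symmetric monoidal functors $U \to UT_2T_2$ (each Tate diagonal is one, by \S\ref{ssec:diagonal}, and the multiplication maps $A^{\otimes 2}\to A$ are lax-monoidal since $A$ has a symmetric multiplication, with the Cartan-formula corollary of \S\ref{ssec:cartan} ensuring $Q(t)\colon A \to A^{t\Sigma_2}$ again carries a symmetric multiplication so the second $\Delta_2$ applies). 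By \Cref{prop:initial} (Nikolaus), $U$ is initial among exact lax symmetric monoidal functors to $\mathsf{Sp}$, so there is at most one natural transformation $U \to UT_2T_2$ of exact lax symmetric monoidal functors up to contractible choice; both composites are such; hence they agree.

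The main obstacle is the bookkeeping in the second paragraph: making the staged-Tate comparison map $T_{\Sigma_2\wr\Sigma_2}(A) \to (T_2(A))^{t\Sigma_2}$ precise as a transformation of lax symmetric monoidal functors, and checking it is compatible with the maps down to $(A^{t\Sigma_2})^{t\Sigma_2}$. Concretely one must see that the $\Sigma_2\wr\Sigma_2$-homotopy-fixed points of $A^{\otimes 4}$, after killing the non-transitive subgroups, are computed by first taking inner-$\Sigma_2$ homotopy fixed points of $A^{\otimes 2}$ (then Tate, i.e. the family generated appropriately), forming the outer tensor square in $k^{t\Sigma_2}$-modules, and taking outer-$\Sigma_2$ Tate — and that the family of non-transitive subgroups of $\Sigma_2\wr\Sigma_2$ is exactly what's needed for the two-step localization to compute the one-step one. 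This is where I would lean hardest on the universal property of $L_{\mathcal{F}}$ from \S\ref{ssec:tate-definitions}, applied twice and compared, rather than on any explicit model.
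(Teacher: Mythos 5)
Your guiding instinct — reduce everything to the uniqueness (by initiality of $U$, Proposition \ref{prop:initial}) of natural transformations of exact, lax symmetric monoidal functors — is the right one, and it is indeed the engine behind the paper's proof. But as stated your argument contains a type error that is not just cosmetic. You claim that both composites $A \to (A^{t\Sigma_2})^{t\Sigma_2}$ ``are'' natural transformations of exact lax symmetric monoidal functors $U \to UT_2T_2$. They are not: (a) their target is $(A^{t\Sigma_2})^{t\Sigma_2}$, which is not $T_2T_2(A) = (((A^{\otimes 2})^{t\Sigma_2})^{\otimes 2})^{t\Sigma_2}$; and (b) both composites depend on the choice of symmetric multiplication on $A$, so they are not natural in $A \in \mathsf{Mod}_k$ at all. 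Initiality of $U$ therefore does not apply to them directly.

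The correct version of your idea requires splitting each composite into a genuinely natural part $U \to UT_2T_2$ and a multiplication-dependent tail $T_2T_2(A) \to (A^{t\Sigma_2})^{t\Sigma_2}$. The natural parts are $(\Delta_2 * T_2)\circ\Delta_2$ on one side, and $\Delta_{\Sigma_2\wr\Sigma_2}$ followed by a comparison $T_{\Sigma_2\wr\Sigma_2} \to T_2T_2$ on the other; initiality of $U$ then makes these agree. But the multiplication-dependent tails must be matched up separately, and that is where the actual content of the proposition lives. The paper does two things you do not: it constructs the comparison $T_{\Sigma_2\wr\Sigma_2} \to T_2T_2$ via the universal property of $T_{\Sigma_2\wr\Sigma_2}$ as a Goodwillie derivative of $\Gamma^{\Sigma_2\wr\Sigma_2}$ (which also gives the compatibility of this comparison with the two natural maps to $(((-)^{\otimes 4})^{t\Sigma_2})^{t\Sigma_2}$, precisely what is needed to match the tails on the $T_2T_2$ side); and it constructs a natural transformation $(-)^{\tau\Sigma_2\wr\Sigma_2} \to ((-)^{t\Sigma_2})^{t\Sigma_2}$ of functors on $\mathsf{Mod}_k^{h\Sigma_4}$ using the subgroup inclusion $\Sigma_2\times\Sigma_2 \hookrightarrow \Sigma_2\wr\Sigma_2$ (diagonal on the first factor) together with the universal property of $L_{\mathcal{F}}$, which is what lets the multiplication $A^{\otimes 4} \to A$ be pushed through on the $\tau\Sigma_2\wr\Sigma_2$ side. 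Your sketch of the ``staged'' comparison via $L_{\mathcal{F}}$ applied ``Borel-fixed-point-wise'' to the inner $\Sigma_2$ does not obviously produce a transformation of lax symmetric monoidal endofunctors of $\mathsf{Mod}_k$ (the source and target live over different ambient groups), whereas the Goodwillie-derivative route does so in one stroke. You correctly flag this as the main obstacle, but it is not filled, and the compatibility of the multiplication-using tails is not addressed at all.
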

\begin{proof} First consider the following diagram:
	\[
	\xymatrix{
	T_2(A) \ar[r]\ar[d] & T_2(T_2(A)) \ar[r]\ar[d] & 
	((A^{\otimes 4})^{t\Sigma_2})^{t\Sigma_2}\ar[d]\\
	A^{t\Sigma_2} \ar[r] & T_2(A^{t\Sigma_2})\ar[r]&
	(A^{t\Sigma_2})^{t\Sigma_2}
	}
	\]
The first square commutes by naturality of the Tate diagonal
applied to the map $T_2(A) \to A$. The second square commutes
by naturality of the lax structure map for $(-)^{t\Sigma_2}$.

It follows that $Q(t) \circ Q(s)$ can be written as the composite:
	\[
	A \to T_2(T_2(A)) \to ((A^{\otimes 4})^{t\Sigma_2})^{t\Sigma_2}
	\to (A^{t\Sigma_2})^{t\Sigma_2}.
	\]
Now consider both $(-)^{\tau\Sigma_2\wr\Sigma_2}$ and
$((-)^{t\Sigma_2})^{t\Sigma_2}$ as exact functors
$\mathsf{Mod}_k^{h\Sigma_4} \to \mathsf{Mod}_k$. We have
a natural transformation
	\[
	(-)^{h\Sigma_2\wr\Sigma_2} \to (-)^{h\Sigma_2 \times\Sigma_2}
	= ((-)^{h\Sigma_2})^{h\Sigma_2} \to 
	((-)^{t\Sigma_2})^{t\Sigma_2}
	\]
where the first map is induced by the inclusion
$\Sigma_2 \times \Sigma_2 \to (\Sigma_2\times\Sigma_2) \rtimes \Sigma_2
= \Sigma_2\wr\Sigma_2$ given by the diagonal on the first factor.
By the universal property of the Tate construction (\S\ref{ssec:tate-definitions}),
we get a natural transformation $(-)^{\tau\Sigma_2\wr\Sigma_2}
\to ((-)^{t\Sigma_2})^{t\Sigma_2}$. In particular, applied to the
multiplication map $A^{\otimes 4} \to A$, we get a commutative diagram:
	\[
	\xymatrix{
	T_{\Sigma_2\wr\Sigma_2}(A) \ar[r]\ar[d] 
	& A^{t\Sigma_2\wr\Sigma_2}\ar[d]\\
	((A^{\otimes 4})^{t\Sigma_2})^{t\Sigma_2}\ar[r] &
	(A^{t\Sigma_2})^{t\Sigma_2}
	}
	\]
Finally, the composite
	\[
	\Gamma^{\Sigma_2\wr\Sigma_2}
	\to \Gamma^{\Sigma_2\times \Sigma_2} \simeq 
	\Gamma^2 \circ \Gamma^2 \to T_2 \circ T_2
	\]
yields a natural transformation $T_{\Sigma_2\wr\Sigma_2} \to
T_2 \circ T_2$ from the universal property of $T_{\Sigma_2\wr\Sigma_2}$
as the Goodwillie derivative of $\Gamma^{\Sigma_2\wr\Sigma_2}$. 
The diagram
	\[
	\xymatrix{
	T_{\Sigma_2\wr\Sigma_2}\ar[d]\ar[dr]\\
	T_2 \circ T_2 \ar[r]& (((-)^{\otimes 4})^{t\Sigma_2})^{t\Sigma_2}
	}
	\]
commutes by the same universal property, and the result follows.
\end{proof}

\subsection{Adem objects}\label{ssec:adem-objects}

For the Adem relations to hold we need the symmetric multiplication
to satisfy an extra condition.

\begin{definition} We say that $k$-module $A$ equipped
with a symmetric multiplication is an 
\textbf{Adem object} if there exists a map
$\mathrm{Sym}^4(A) \to A$ such that the diagram
	\[
	\xymatrix{
	\mathrm{Sym}^2(\mathrm{Sym}^2(A))\ar[d]\ar[r] &
	\mathrm{Sym}^2(A) \ar[d]\\
	\mathrm{Sym}^4(A) \ar[r] & A
	}
	\]
commutes up to homotopy.
\end{definition}

\begin{proposition}\label{prop:adem}
If $A$ is an Adem object, then we have a commutative
diagram:
	\[
	\xymatrix{
	&&A^{\tau \Sigma_4}\ar[d]\\
	&&A^{\tau\Sigma_2 \wr \Sigma_2}\ar[d]\\
	A \ar[rr]_{Q(t)\circ Q(s)}\ar[urr]\ar[uurr] && (A^{t\Sigma_2})^{t\Sigma_2}
	}
	\]
\end{proposition}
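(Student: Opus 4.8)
The plan is to upgrade the argument of Proposition \ref{prop:iterate} by inserting one more Tate construction into the tower, now using the family $\mathcal{T}$ of non-transitive subgroups of $\Sigma_4$ rather than that of $\Sigma_2 \wr \Sigma_2$. The key point is that the Adem condition supplies a $\Sigma_4$-equivariant map $A^{\otimes 4} \to A$ which is \emph{compatible}, after forgetting to $\Sigma_2 \wr \Sigma_2$, with the iterated multiplication already used in Proposition \ref{prop:iterate}. Concretely, the hypothesis says that $\mathrm{Sym}^4(A) \to A$ restricts along $\mathrm{Sym}^{\Sigma_2\wr\Sigma_2}(A) \to \mathrm{Sym}^4(A)$ to the iterated product; dually, one should record the same data at the level of $T_{\Sigma_4}$ and $T_{\Sigma_2\wr\Sigma_2}$.

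First I would observe that the inclusion $\Sigma_2 \wr \Sigma_2 \hookrightarrow \Sigma_4$ sends non-transitive subgroups of $\Sigma_2 \wr \Sigma_2$ to non-transitive subgroups of $\Sigma_4$, so by the universal property of the $\mathcal{T}$-Tate construction (\S\ref{ssec:tate-definitions}) applied to the restriction functor $\mathsf{Mod}_k^{h\Sigma_4} \to \mathsf{Mod}_k^{h\Sigma_2\wr\Sigma_2}$, there is a natural transformation of exact functors $(-)^{\tau\Sigma_4} \to (-)^{\tau\Sigma_2\wr\Sigma_2}$ on $\mathsf{Mod}_k^{h\Sigma_4}$. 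Evaluating on $A^{\otimes 4}$, which carries its full $\Sigma_4$-action, and postcomposing with the Adem map $A^{\otimes 4} \to A$, we obtain the vertical map $A^{\tau\Sigma_4} \to A^{\tau\Sigma_2\wr\Sigma_2}$ in the statement, together with a commuting square relating it to $T_{\Sigma_4}(A) \to T_{\Sigma_2\wr\Sigma_2}(A)$.

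Next I would assemble the diagram. There is a Tate diagonal $\Delta_{\Sigma_4}: A \to T_{\Sigma_4}(A)$ and, by naturality of the Tate diagonal for exact lax symmetric monoidal functors (\S\ref{ssec:diagonal}) applied to the restriction/forgetful comparison, a triangle expressing that $\Delta_{\Sigma_2\wr\Sigma_2}$ is the composite $A \to T_{\Sigma_4}(A) \to T_{\Sigma_2\wr\Sigma_2}(A)$ — here one uses the uniqueness clause in the universal property, since both composites are natural transformations of exact lax symmetric monoidal functors $U \Rightarrow U T_{\Sigma_2\wr\Sigma_2}$ and hence agree by Proposition \ref{prop:initial}. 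Stacking this triangle on top of the diagram of Proposition \ref{prop:iterate}, and using the compatibility of the Adem map with the iterated multiplication to identify the two routes into $A^{\tau\Sigma_2\wr\Sigma_2} \to (A^{t\Sigma_2})^{t\Sigma_2}$, gives exactly the claimed commutative diagram.

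The main obstacle I anticipate is the bookkeeping in the previous paragraph: making precise the sense in which the Adem condition — stated as a homotopy-commuting square of $\mathrm{Sym}$'s — produces a genuinely commuting diagram after passing through $T_{(-)}$ and the Tate diagonal, without choosing incoherent lifts. The clean way around this is to phrase everything in terms of the universal properties: the map $T_{\Sigma_4} \to T_{\Sigma_2\wr\Sigma_2}$ of exact functors, the natural transformation $T_{\Sigma_2\wr\Sigma_2} \to T_2 \circ T_2$ already constructed in Proposition \ref{prop:iterate}, and the fact that $\mathrm{Map}$ between exact lax symmetric monoidal functors out of $\mathsf{Mod}_k$ is controlled by evaluation at $k$ (as in the proof in \S\ref{ssec:compare}). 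One should only need to know that the two resulting natural transformations $A \to (A^{t\Sigma_2})^{t\Sigma_2}$ agree, and both are pinned down by what they do to $k$, where the computation is forced. Everything else is formal diagram-pasting of the squares produced by naturality.
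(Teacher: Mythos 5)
Your proposal is correct and takes essentially the same approach as the paper: factor the top triangle into the compatibility of Tate diagonals (forced by the uniqueness clause of Proposition \ref{prop:initial}) and the compatibility of the $\Sigma_4$-equivariant multiplication with the iterated one (which is precisely the Adem condition read through the change-of-groups/families map $(-)^{\tau\Sigma_4}\to(-)^{\tau\Sigma_2\wr\Sigma_2}$), then append Proposition \ref{prop:iterate} for the bottom triangle. If anything, your write-up makes explicit a decomposition that the paper's one-line appeal to initiality leaves implicit.
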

\begin{proof} By Proposition \ref{prop:iterate}, the bottom triangle commutes.
The top triangle commutes because each arrow is a transformation
of exact, lax symmetric monoidal functors, and $U:
\mathsf{Mod}_k \to \mathsf{Sp}$ is initial amongst such functors
(Proposition \ref{prop:initial}). 
\end{proof}

\begin{theorem}[Adem relations] If $A$ is an Adem object and
$x \in \pi_*A$ is an element, then
$Q(t)(Q(s)x)$ is symmetric in the variables $s$ and $t$. Explicitly:
	\[
	\sum_{i,j} (Q^iQ^jx)(s+s^2t^{-1})^jt^i
	= \sum_{i,j} (Q^iQ^jx)(t+t^2s^{-1})^js^i.
	\]
\end{theorem}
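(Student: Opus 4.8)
The plan is to leverage Proposition~\ref{prop:adem}: for an Adem object $A$, the iterated total power operation $Q(t)\circ Q(s)$ factors through the Tate spectrum $A^{\tau\Sigma_4}$. The symmetry in $s$ and $t$ will then be forced by an \emph{inner} automorphism of $\Sigma_4$, exactly as in the Segal-style argument recalled in the introduction. First I would make explicit the isomorphism $\pi_*(A^{t\Sigma_2})^{t\Sigma_2}\cong A_*(\!(s)\!)(\!(t)\!)$ (applying the example of \S\ref{ssec:tate-example} twice, or rather the lemma computing $\pi_*M^{t\Sigma_2}$ for $M$ with trivial action applied to $M=A^{t\Sigma_2}$), so that the composite $A\to(A^{t\Sigma_2})^{t\Sigma_2}$ sends $x$ to $Q(t)(Q(s)x)$ with the substitution rule $Q(t)(s)=s+s^2t^{-1}$ from the example in \S\ref{ssec:opns-example} governing how the inner $s$-variable is transformed; this gives the explicit left-hand side $\sum_{i,j}(Q^iQ^jx)(s+s^2t^{-1})^j t^i$.

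Next I would identify the relevant symmetry on $A^{\tau\Sigma_4}$. The wreath product $\Sigma_2\wr\Sigma_2$ sits inside $\Sigma_4$, and the two ``coordinate'' copies of $\Sigma_2$ inside $\Sigma_2\times\Sigma_2\subseteq\Sigma_2\wr\Sigma_2$ — the ones that produce the outer variable $t$ and the inner variable $s$ respectively through the natural transformation $(-)^{h\Sigma_2\wr\Sigma_2}\to((-)^{t\Sigma_2})^{t\Sigma_2}$ built in the proof of Proposition~\ref{prop:iterate} — are swapped by conjugation by an element of $\Sigma_4$ (concretely, the permutation $(2\,3)$, which normalizes $\Sigma_2\times\Sigma_2$ and exchanges its two factors). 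Since conjugation by a group element acts trivially on $(-)^{hG}$ up to canonical homotopy, and the Tate construction $(-)^{\tau\Sigma_4}$ is defined by a universal property from $(-)^{h\Sigma_4}$, this inner automorphism acts as the identity on $A^{\tau\Sigma_4}$. I would then chase the diagram of Proposition~\ref{prop:adem}: the map $A\to A^{\tau\Sigma_4}$ is manifestly $\Sigma_4$-canonical (it comes from the Tate diagonal, which is natural), so precomposing the factorization $A\to A^{\tau\Sigma_4}\to(A^{t\Sigma_2})^{t\Sigma_2}$ with the identity-up-to-homotopy given by the inner automorphism, and observing that this automorphism intertwines the two copies of $\Sigma_2$, shows that $Q(t)(Q(s)x)$ equals the same expression with the roles of the variables interchanged. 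Finally I would record that interchanging the two $\Sigma_2$'s corresponds precisely to swapping $s\leftrightarrow t$ in $\pi_*(A^{t\Sigma_2})^{t\Sigma_2}$, giving the right-hand side $\sum_{i,j}(Q^iQ^jx)(t+t^2s^{-1})^j s^i$.

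The step I expect to be the main obstacle is making the bookkeeping of ``which variable is which'' completely rigorous: one must check that the transposition $(2\,3)\in\Sigma_4$ really does carry the specific embedding $\Sigma_2\times\Sigma_2\hookrightarrow\Sigma_2\wr\Sigma_2\hookrightarrow\Sigma_4$ used in Proposition~\ref{prop:iterate} (diagonal on the first factor) to the one with the factors reversed, and that under the resulting identification the two natural transformations $(-)^{\tau\Sigma_4}\to((-)^{t\Sigma_2})^{t\Sigma_2}$ — the given one and its conjugate — agree with the two composites $Q(t)\circ Q(s)$ and $Q(s)\circ Q(t)$ respectively, rather than with some other pair. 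This is the place where the universal property of the Tate construction among transformations of exact lax symmetric monoidal functors does the real work: both transformations $A^{\tau\Sigma_4}\to(A^{t\Sigma_2})^{t\Sigma_2}$ are transformations of such functors out of $(-)^{h\Sigma_4}$, hence are determined by their restriction along $(-)^{h\Sigma_4}$, where the inner automorphism is visibly trivial; so once the combinatorics of the embeddings is pinned down, the equality of the two iterated operations is automatic. The passage from this identity of endomorphisms to the displayed formula in $A_*(\!(s)\!)(\!(t)\!)$ is then the routine translation already carried out in the example of \S\ref{ssec:opns-example}.
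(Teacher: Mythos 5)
Your proof is correct and is essentially the paper's argument, just made more explicit: factor $Q(t)\circ Q(s)$ through $A^{\tau\Sigma_4}$ via Proposition~\ref{prop:adem}, observe that the $s\leftrightarrow t$ swap is induced by an inner automorphism of $\Sigma_4$ (you correctly pin down $(2\,3)$, which conjugates the diagonal-times-outer copy of $\Sigma_2\times\Sigma_2$ inside $\Sigma_2\wr\Sigma_2\subseteq\Sigma_4$ to the one with the factors exchanged), which acts trivially on anything built from $(-)^{h\Sigma_4}$ by the universal property, and then translate into the power-series identity using the computation from \S\ref{ssec:opns-example}. The paper states this very tersely; your extra care over which transposition does the job and why the two natural transformations to $((-)^{t\Sigma_2})^{t\Sigma_2}$ agree is a useful elaboration but not a different argument.
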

\begin{proof} By Proposition \ref{prop:adem}, the iterated total power
operation factors through $A^{\tau\Sigma_4}$ and the operation
which swaps $s$ and $t$ arises from an inner automorphism of
$\Sigma_4$ which thus acts trivially on the Tate constructs, whence the
claim. The explicit formula follows from the basic properties of power
operations, the Cartan formula,
and the computation in \S\ref{ssec:opns-example}.
\end{proof}

\subsection{Residues and relations}\label{ssec:residues}

Now we recall how to recover the individual Adem relations using
the power series identity above. 

\begin{proposition} Let $A$ be an Adem object and $x \in A_*$
a homotopy class. Then:
	\[
	Q^iQ^j(x) = \sum_{\ell} \binom{\ell-j-1}{2\ell-i} Q^{i+j-\ell}Q^{\ell}(x).
	\]
\end{proposition}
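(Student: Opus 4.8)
The plan is to extract the individual Adem relations from the symmetry identity
\[
\sum_{i,j} (Q^iQ^jx)\,(s+s^2t^{-1})^j\,t^i
= \sum_{i,j} (Q^iQ^jx)\,(t+t^2s^{-1})^j\,s^i
\]
by the residue trick of Bullett--Macdonald. The idea is to pair both sides against a suitable monomial in $s$ and $t$ and read off coefficients. Concretely, I would multiply the identity by an appropriate power of $s$ and $t$ and take the coefficient extraction $\operatorname{Res}_{s}\operatorname{Res}_{t}$, i.e. the functional picking out the coefficient of $s^{-1}t^{-1}$. Since the right-hand side is obtained from the left by swapping $s \leftrightarrow t$, applying $\operatorname{Res}_s\operatorname{Res}_t$ to the left side and $\operatorname{Res}_t\operatorname{Res}_s$ to the right gives two \emph{a priori} different expressions that must agree.

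The key computational step is a change of variables. On the left-hand side the relevant substitution is $u = s + s^2t^{-1}$ (so $s$ is an implicit function of $u$ and $t$); on the right it is $v = t + t^2s^{-1}$. First I would fix target indices, say we want the coefficient of $Q^aQ^b(x)$-type terms, and multiply through by $t^{-a-1}$ times a power of $s$ chosen so that extracting the $s^{-1}t^{-1}$ coefficient isolates the desired sum. Expanding $(s+s^2t^{-1})^j = s^j(1+st^{-1})^j = \sum_\ell \binom{j}{\ell} s^{j+\ell}t^{-\ell}$ on the left, and similarly on the right, the coefficient extraction turns each side into a single sum of $Q^\bullet Q^\bullet(x)$ with binomial coefficients. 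Matching the two expressions yields
\[
Q^iQ^j(x) = \sum_{\ell} \binom{\ell-j-1}{2\ell-i}\, Q^{i+j-\ell}Q^{\ell}(x),
\]
after reindexing and using the standard identity $\binom{-m}{n} = (-1)^n\binom{m+n-1}{n}$ to rewrite negative upper-index binomials mod $2$ (where signs are irrelevant).

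The main obstacle is purely bookkeeping: one must be careful that the formal Laurent series manipulations are legitimate. Both $Q(t)$ and $Q(s)$ produce series that are Laurent in the \emph{outer} variable but only have finitely many negative powers, and after the substitution $u = s+s^2t^{-1}$ one needs to check that re-expansion in $t$ stays within $A_*(\!(s)\!)(\!(t)\!)$ (or the appropriate completed ring) so that coefficient extraction is well-defined and commutes past the (finite, for fixed output degree) sums. The instability property $Q^i x = 0$ for $i < |x|$ guarantees the relevant sums over $i,j$ are bounded below, which is what makes everything converge. Once the formal framework is pinned down, the extraction of coefficients and the binomial identity are routine; I would cite \cite{bullett-macdonald} for the original form of the manipulation and simply carry out the substitution and coefficient comparison explicitly.
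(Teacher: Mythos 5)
You have correctly identified the paper's strategy (the Bullett--Macdonald residue trick applied to the symmetry identity), and you correctly name the two essential ingredients: the substitution $u = s + s^2t^{-1}$ and the binomial reindexing $\binom{-m}{n} \equiv \binom{m+n-1}{n}$ mod $2$. But the middle of your argument, where you say you would ``multiply the identity by an appropriate power of $s$ and $t$'' and then expand $(s+s^2t^{-1})^j = \sum_\ell \binom{j}{\ell}s^{j+\ell}t^{-\ell}$, does not actually isolate $Q^iQ^j(x)$ on the left-hand side. If you simply compare coefficients of monomials $s^at^b$, you get an identity of the form
\[
\sum_{\ell} \binom{a-\ell}{\ell} Q^{b+\ell}Q^{a-\ell}x \;=\; \sum_{\ell} \binom{b-\ell}{\ell} Q^{a+\ell}Q^{b-\ell}x,
\]
which has several terms on each side and is not the stated Adem relation. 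The whole point of the substitution is that the left side of the symmetry identity is literally $\sum_j Q(t)(Q^jx)\,u^j$, so extracting the coefficient of $u^j$ (that is, $\operatorname{res}_u(u^{-j-1}(\cdot)\,du)$, a residue against a power of $u$, not of $s$) recovers $Q(t)(Q^jx) = \sum_i (Q^iQ^jx)t^i \in k(\!(t)\!)$ on the nose, and then one simply reads off coefficients of $t^i$. No double residue is needed.

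The crucial lemma you omit, and which makes the residue computable, is that $du = ds$ mod $2$ (since $\partial u/\partial s = 1 + 2st^{-1} \equiv 1$), so $\operatorname{res}_u(f\,du) = \operatorname{res}_s(f\,ds)$. One then reindexes the right side of the symmetry identity as $\sum_{i,\ell}(Q^{i+j-\ell}Q^\ell x)(t+t^2s^{-1})^\ell s^{i+j-\ell}$, writes $u = st^{-1}(t+s)$ and $t+t^2s^{-1} = s^{-1}t(t+s)$ so that
\[
u^{-j-1}(t+t^2s^{-1})^\ell s^{i+j-\ell} = t^{\ell+j+1}s^{i-2\ell-1}(t+s)^{\ell-j-1},
\]
and extracts the coefficient of $s^{-1}$, which is $\binom{\ell-j-1}{2\ell-i}t^i$. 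Without the $du = ds$ observation and the explicit rewriting of $u$ and $t+t^2s^{-1}$ as products involving $(t+s)$, the claimed ``single sum with binomial coefficients'' does not materialize, so as written the proof has a genuine gap in exactly the step where all the work is.

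Your remarks about convergence (instability guaranteeing boundedness below, so that the Laurent-series manipulations are legitimate) are sound and worth keeping, and the overall plan is the right one; the fix is to replace the naive monomial-coefficient extraction with the actual $u$-residue, carried out as above.
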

\begin{proof} In the previous section we showed
	\[
	\sum_j Q(t)(Q^jx) (s+s^2t^{-1})^j =
	\sum_{i, j} (Q^kQ^jx)(t+t^2s^{-1})^j s^k.
	\]
Let $u = s+s^2t^{-1}$ and observe that this is composition invertible
as a power series in $s$ with coefficients in $k(\!(t)\!)$. Now,
	\[
	Q(t)(Q^jx) = \sum_i (Q^iQ^jx)t^i
	\]
is the coefficient of $u^j$ on the left hand side, so we would like
to compute the coefficient of $u^j$ on the right hand side.
It will be convenient to reindex the right hand side, for fixed $j$, as:
	\[
	\sum_{i, \ell} (Q^{i+j-\ell}Q^{\ell}x) (t+t^2s^{-1})^{\ell}s^{i+j-1}.
	\]
Observe that $du = ds$ since $2=0$ in $k$, and hence
	\[
	\mathrm{res}(u^{-j-1}(Q^{i+j-\ell}Q^{\ell}x) (t+t^2s^{-1})^{\ell}s^{i+j-1} du)
	=
	\mathrm{res}(u^{-j-1}(Q^{i+j-\ell}Q^{\ell}x) (t+t^2s^{-1})^{\ell}s^{i+j-1} ds).
	\]
Fixing $i$ and $\ell$ and 
writing $u = st^{-1}(t+s)$ and $(t+t^2s^{-1}) = s^{-1}t(t+s)$, we have
	\[
	u^{-j-1}(t+t^2s^{-1})^{\ell}s^{i+j-1} = t^{\ell+j+1}s^{i-2\ell-1}(t+s)^{\ell-j-1}.
	\]
The coefficient of $s^{-1}$ in the previous expression is then
	\[
	\binom{\ell-j-1}{2\ell-i}t^i
	\]
and the result follows.
\end{proof}

\section{Relationship to the Steenrod algebra}\label{sec:steenrod}

In this section we restrict to the case $k=\mathbb{F}_2$ for ease
of exposition. In \S\ref{ssec:coact-on-tate}
we recall the Steenrod coaction on the Tate spectrum, then in
\S\ref{ssec:nishida} we use this to give a succinct proof of
the Nishida relations. Finally, in \S\ref{ssec:steenrod} we show
how this determines the action of $Q(t)$ on the dual
Steenrod algebra, following an idea of Bisson-Joyal.

\subsection{Coaction on the Tate spectrum}\label{ssec:coact-on-tate}

The map $k = k \wedge S^0 \to k \wedge k$ gives rise to
a map $k^{t\Sigma_2} \to (k \wedge k)^{t\Sigma_2}$ if we equip
the source and target with trivial $\Sigma_2$ action.

This induces a completed coaction:
	\[
	\psi_R: k(\!(t)\!) \to \mathcal{A}_*(\!(t)\!).
	\]
Now recall that Milnor defined generators\footnote{Note that we are following
Milnor's convention and not the more recent trend of using
$\zeta_i$ to denote the \emph{conjugates} of Milnor's generators.}
of the dual Steenrod algebra by the identity
	\[
	\psi_R(t) = \sum \zeta_i t^{2^i}.
	\]
\subsection{Nishida relations}\label{ssec:nishida}

The easier version of the Nishida relations in this context
is in terms of the coaction.

\begin{theorem}[Bisson-Joyal, Baker] Let $X$ be a
spectrum equipped with an equivariant symmetric multiplication
$X^{\wedge 2}_{h\Sigma_2} \to X$. Then 
	\[
	\sum_i \psi_R(Q^ix) t^i = Q(\overline{\zeta}(t))\psi_R(x)
	\in (\mathrm{H}_*X \otimes \mathcal{A}_*)(\!(t)\!).
	\]
\end{theorem}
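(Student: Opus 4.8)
The plan is to deduce the Nishida relations from the naturality of the Tate-valued Frobenius (Remark~\ref{rmk:naturality}) applied to the unit map $\eta\colon S \to k = \mathbb{F}_2$ of the sphere spectrum, together with the computation of the $\Sigma_2$-Tate construction on a $k$-module with trivial action (the Lemma computing $\pi_*M^{t\Sigma_2} \simeq M_*(\!(t)\!)$). The key observation is that for a spectrum $X$ equipped with an $\mathbb{E}_\infty$-structure (more precisely just an equivariant multiplication $X^{\wedge 2}_{h\Sigma_2}\to X$), the Tate-valued Frobenius $X \to X^{t\Sigma_2}$ is natural; smashing with $k$ and using that $k$-homology is $\pi_*(k \otimes -)$, we get a commutative square relating $Q(t)$ for $X$ and $Q(t)$ for $k \otimes X$. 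But $k\otimes X$ is a $k$-module with a symmetric multiplication (it is $k \otimes X$ with the diagonal action which, by the Lemma on $\mathrm{Sym}^2(\Sigma^n k)$-type arguments, is trivial on the $k$-factor), so its power operations are the operations $Q^i$ on $\mathrm{H}_*X$. The coaction $\psi_R$ is exactly induced by $k\otimes X \to k \otimes k \otimes X$, i.e. by the unit $k \to k \otimes k$ on the left factor.

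\medskip

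Concretely, I would proceed as follows. First, record that the square
	\[
	\xymatrix{
	\mathrm{H}_*X \ar[r]^-{Q(t)}\ar[d]_{\psi_R} & (\mathrm{H}_*X)(\!(t)\!) \ar[d]^{\psi_R}\\
	(\mathrm{H}_*X\otimes \mathcal{A}_*) \ar[r]^-{Q(t)} & (\mathrm{H}_*X \otimes \mathcal{A}_*)(\!(t)\!)
	}
	\]
commutes --- this is naturality of the Tate-valued Frobenius applied to the map $k\otimes X \to (k\otimes k)\otimes X$ of spectra-with-symmetric-multiplication, combined with the identification of $\pi_*$ of the relevant Tate constructions via the Lemma, being careful that on the right-hand side the coaction is the \emph{completed} one landing in $\mathcal{A}_*(\!(t)\!)$ as in \S\ref{ssec:coact-on-tate}. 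The bottom horizontal map is the total power operation for the $k$-module $k\otimes k\otimes X$, whose symmetric multiplication is the evident one; to evaluate it I use the Cartan formula (\S\ref{ssec:cartan}) together with the computation of $Q(t)$ on $k^{t\Sigma_2}_* = k(\!(s)\!)$, exactly as in \S\ref{ssec:opns-example}. The only subtlety is that the multiplication on the middle $k$-factor --- the one carrying $\mathcal{A}_*$ --- is the one for which $Q(t)$ acts via $s \mapsto s + s^2 t^{-1}$, and so after the change of variable $t = \psi_R(t) = \sum \zeta_i t^{2^i}$ encoding the $\mathcal{A}_*$-coaction, the substitution variable becomes $\overline{\zeta}(t)$, the conjugate; tracking this substitution carefully is what produces $Q(\overline{\zeta}(t))$ rather than $Q(t)$ on the right.

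\medskip

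The computation then reads off as: going around the square clockwise, $x \mapsto Q(t)x = \sum_i (Q^ix)t^i \mapsto \sum_i \psi_R(Q^ix) t^i$; going counterclockwise, $x \mapsto \psi_R(x) \mapsto Q(t)\psi_R(x)$, and since $Q(t)$ here is the power operation on $\mathrm{H}_*X \otimes \mathcal{A}_*$ with the $\mathcal{A}_*$-multiplication twisted by the dual-Steenrod-algebra structure, the Cartan formula applied to $\psi_R(x) = \sum x' \otimes a$ gives $\sum Q(t)(x') \otimes Q(t')(a)|_{t' = \text{appropriate variable}}$, which after identifying the relevant total operation on $\mathcal{A}_*$ with evaluation at $\overline{\zeta}(t)$ becomes $Q(\overline{\zeta}(t))\psi_R(x)$. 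Equating the two expressions gives the stated identity.

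\medskip

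The \textbf{main obstacle} is bookkeeping the two roles played by the variable $t$ and the appearance of the \emph{conjugate} $\overline{\zeta}(t)$: one must distinguish the Tate parameter coming from the Frobenius $X \to X^{t\Sigma_2}$ from the one implicit in the coaction $\psi_R(t) = \sum\zeta_i t^{2^i}$, and verify that composing the total power operation on the $k$-line with the Steenrod coaction --- both of which are "change of coordinate" operations on $k(\!(t)\!)$ --- produces precisely the substitution $Q(\overline{\zeta}(t))$ and not $Q(\zeta(t))$. This amounts to checking that the relevant diagram of lax symmetric monoidal transformations commutes on the level of the coordinate $t \in \pi_{-1}$, using that $\zeta$ and $\overline\zeta$ are related by the antipode, which in turn reflects the difference between the left and right unit maps $k \to k\otimes k$. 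Once this sign-free but notation-heavy point is pinned down, the rest is a direct application of results already established: naturality of Frobenius, the Cartan formula, and the example computation of $Q(t)(s)$.
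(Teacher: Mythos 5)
The overall strategy is the same as the paper's: apply naturality of the Tate-valued Frobenius (Remark~\ref{rmk:naturality}) to the coaction map $k \wedge X \to k \wedge X \wedge k$, which is a map of spectra with symmetric multiplications, and read off the resulting identity on homotopy. However, your diagram chase contains a genuine error.

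The right-hand vertical arrow in your square is the \emph{completed} coaction $(\psi_R)^{t\Sigma_2}$, whose effect on $\pi_*$ is the map $k(\!(t)\!) \to \mathcal{A}_*(\!(t)\!)$ recalled in \S\ref{ssec:coact-on-tate}, and this map does not fix $t$: it sends $t \mapsto \zeta(t) = \sum \zeta_i t^{2^i}$. Your clockwise traversal therefore does not produce $\sum_i \psi_R(Q^i x)\,t^i$; since the completed $\psi_R$ is a ring map, it produces
\[
\psi_R\!\left(\sum_i Q^i x \cdot t^i\right) \;=\; \sum_i \psi_R(Q^i x)\,\zeta(t)^i .
\]
With this corrected, the commutativity of the square gives $\sum_i \psi_R(Q^i x)\,\zeta(t)^i = Q(t)\psi_R(x)$, and the stated identity follows immediately by substituting $\overline{\zeta}(t)$ for $t$ and using $\zeta(\overline{\zeta}(t)) = t$. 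In particular, the Cartan formula plays no role here; your attempt to extract $Q(\overline{\zeta}(t))$ from a Cartan-type decomposition of $Q(t)$ on $k \wedge X \wedge k$, with a ``twisted'' multiplication and a second substitution variable $t'$, is not a step that can be made to work as written --- the multiplication on $k \wedge X \wedge k$ is the ordinary smash-product one, and the Cartan formula for it involves only one variable $t$. The conjugate series is produced solely by inverting the change of variable $t \mapsto \zeta(t)$ that the right-hand vertical arrow introduces, not by anything happening along the bottom horizontal arrow.
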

\begin{proof} Let $k \wedge X$. Then the right coaction
$k \wedge X \to (k \wedge X) \otimes_k (k\wedge k)$
is a map of \emph{spectra} equipped
with symmetric multiplications (though it is not a map of $k$-modules
equipped with symmetric multiplications). By Remark \ref{rmk:naturality}
this yields a commutative diagram:
	\[
	\xymatrix{
	k \wedge X \ar[r]^-{\psi_R}\ar[d] & (k \wedge X) \otimes_k (k \wedge k)\ar[d]\\
	(k \wedge X)^{t\Sigma_2} \ar[r]_-{(\psi_R)^{t\Sigma_2}} & 
	((k \wedge X) \otimes_k (k \wedge k))^{t\Sigma_2}
	}
	\]
The bottom map arises by applying $(-)^{t\Sigma_2}$ to 
$k \wedge X \to k \wedge X \wedge k$ and this precisely gives the
completed coaction on $(k \wedge X)^{t\Sigma_2}$. In other words:
	\[
	\psi_R(Q(t)x) = Q(t)(\psi_R(x)).
	\] 
Since $\psi_R$ is a ring map, and $\psi_R(t) = \zeta(t)$,
this becomes:
	\[
	\sum \psi_R(Q^ix) \zeta(t)^i = Q(t)(\psi_R(x)).
	\]
Now substitute the conjugate series $\overline{\zeta}(t)$ for $t$
and use the defining relation $\zeta(\overline{\zeta}(t)) = t$.
\end{proof}

\subsection{Action on the dual Steenrod algebra}\label{ssec:steenrod}

The following description of the action of the $Q^i$ on $\mathcal{A}_*$
is essentially
that of Bisson-Joyal \cite[\S1, Prop. 6]{bisson-joyal}.

\begin{theorem}[Bisson-Joyal]\label{thm:action}
The total power operation on the Milnor generators $\zeta_i$
is determined implicitly by the identity:
	\begin{align}
	\zeta(s) + \zeta(s)^2\zeta(t)^{-1} 
	&= \sum_i (Q(t)\zeta_i) (s^{2^i} + s^{2^{i+1}}t^{-2^i})
	\end{align}
	\begin{align}
	t^{2^n}Q(t)\zeta_n &= \left(\sum_{i\ge n+1}\zeta_it^{2^i}\right)
	+ \zeta(t)^{-1}\left(\sum_{i\ge n}\zeta_i^2t^{2^{i+1}}\right).
	\end{align}
\end{theorem}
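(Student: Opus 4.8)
The plan is to extract both identities from the Nishida relations of the previous subsection by specializing to the case $X = \mathbb{S}$, so that $\mathrm{H}_*X = \mathbb{F}_2$ and the coaction becomes the \emph{right unit} $\eta_R = \psi_R : \mathcal{A}_* \to \mathcal{A}_* \otimes \mathcal{A}_*$ on the dual Steenrod algebra. Concretely, one works inside $\mathcal{A}_*(\!(t)\!)$ and uses the identity from the Nishida theorem, namely $\sum_i \psi_R(Q^i x) t^i = Q(\overline{\zeta}(t))\psi_R(x)$, together with the computation in \S\ref{ssec:opns-example} which tells us $Q(t)$ acts on the polynomial generator $s$ of $k^{t\Sigma_2}_* = k(\!(s)\!)$ by $s \mapsto s + s^2 t^{-1}$, and that $Q(t)$ is multiplicative (the Cartan formula) and natural for maps of $\mathbb{E}_\infty$-rings.

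For the first identity, the key observation is that $\zeta(s) = \psi_R(s) \in \mathcal{A}_*(\!(s)\!)$ is the image of $s$ under the coaction map $\psi_R : k^{t\Sigma_2}_* \to (\mathcal{A}_* \otimes k^{t\Sigma_2})_*$ extending $k \to k \wedge k$. Since $\psi_R$ commutes with the total power operation $Q(t)$ (this is precisely the content of the naturality square in the Nishida proof, applied with $X = \mathbb{S}$ and the ring $k^{t\Sigma_2}$ in place of $k \wedge X$), we get $Q(t)(\zeta(s)) = \psi_R(Q(t)(s)) = \psi_R(s + s^2 t^{-1}) = \zeta(s) + \zeta(s)^2 \zeta(t)^{-1}$, using that $\psi_R$ is a ring map and $\psi_R(t^{-1}) = \zeta(t)^{-1}$. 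On the other hand, applying $Q(t)$ directly to the defining identity $\zeta(s) = \sum_i \zeta_i s^{2^i}$ and using multiplicativity of $Q(t)$ plus $Q(t)(s^{2^i}) = Q(t)(s)^{2^i} = (s + s^2 t^{-1})^{2^i} = s^{2^i} + s^{2^{i+1}} t^{-2^i}$, we obtain $Q(t)(\zeta(s)) = \sum_i (Q(t)\zeta_i)(s^{2^i} + s^{2^{i+1}} t^{-2^i})$. Equating the two expressions gives identity (1).

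For the second identity, I would extract the coefficient of a suitable power of $s$ from identity (1), or equivalently manipulate it algebraically. Write the left side as $\zeta(s) + \zeta(s)^2 \zeta(t)^{-1} = \sum_i \zeta_i s^{2^i} + \zeta(t)^{-1} \sum_i \zeta_i^2 s^{2^{i+1}}$, so the coefficient of $s^{2^{n+1}}$ on the left is $\zeta_{n+1} + \zeta(t)^{-1}\zeta_n^2$ — wait, more carefully one should isolate the $s^{2^{n+1}}$-coefficient on \emph{both} sides. On the right side, the term $(Q(t)\zeta_i)(s^{2^i} + s^{2^{i+1}} t^{-2^i})$ contributes to $s^{2^{n+1}}$ only from $i = n+1$ (via $s^{2^{n+1}}$) and from $i = n$ (via $s^{2^{n+1}} t^{-2^n}$), giving $Q(t)\zeta_{n+1} + t^{-2^n} Q(t)\zeta_n$. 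Matching coefficients yields a recursion; summing this telescoping family of recursions (or reorganizing the comparison to isolate $Q(t)\zeta_n$ directly by grouping all higher terms) produces the closed form $t^{2^n} Q(t)\zeta_n = \sum_{i \ge n+1} \zeta_i t^{2^i} + \zeta(t)^{-1} \sum_{i \ge n} \zeta_i^2 t^{2^{i+1}}$. The bookkeeping of which $s$-powers appear is the only delicate point, and I expect the main obstacle to be organizing the comparison of coefficients cleanly — in particular verifying that the "defect" terms telescope correctly so that the infinite sums on the right of identity (2) emerge, rather than getting stuck with an unsolved recursion. This is a purely algebraic manipulation once identity (1) is in hand, essentially the argument of Bisson-Joyal.
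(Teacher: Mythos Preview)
Your proposal is correct and follows essentially the same route as the paper: both arguments boil down to the single identity $\psi_R(Q(t)s) = Q(t)\psi_R(s)$, then substitute $Q(t)s = s + s^2t^{-1}$ and $\psi_R(s) = \zeta(s)$ to obtain (1), and compare coefficients of a power of $s$ to get a recursion whose solution is (2). Your opening framing about ``specializing the Nishida relations to $X = \mathbb{S}$'' is slightly off --- what you actually use (and what the paper uses) is naturality of the Tate-valued Frobenius for the map $k^{t\Sigma_2} \to (k\wedge k)^{t\Sigma_2}$, i.e.\ Remark~\ref{rmk:naturality} applied to $\psi_R$ --- but you arrive at exactly this in your parenthetical, so there is no gap.
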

\begin{proof} Write $\pi_*k^{h\Sigma_2} = k\llbracket s\rrbracket$. Then:
	\[
	\psi_R(Q(t)s) = Q(t)\psi_R(s).
	\]
Now use the identities $Q(t)s = s+s^2t^{-1}$ and $\psi_R(s) = \zeta(s)$.
Comparing coefficients for $s^{2^n}$ gives a recursion for
$Q(t)\zeta_n$ starting with $Q(t)\zeta_0 = Q(t)1 = 1$ and (2)
is solves the recursion.
\end{proof}

It is not difficult to extract the earlier results of Steinberger
\cite[\S III.2]{bmms}.

\begin{corollary}[Steinberger] For $i\ge 2$,
$Q^{2^i-2}\zeta_1 = \overline{\zeta}_i$.
\end{corollary}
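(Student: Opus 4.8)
The plan is to extract the corollary purely algebraically from the implicit recursion in Theorem~\ref{thm:action}, specializing the master identity (2) to read off enough low-degree information about $Q(t)\zeta_n$ and then isolating the coefficient of the appropriate power of $t$.

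First I would record what the recursion (2) says in the two cases that matter. Setting $n=0$ gives $Q(t)\zeta_0 = Q(t)1 = 1$, as noted. Setting $n=1$ in (2) yields
\[
t^2 Q(t)\zeta_1 = \left(\sum_{i\ge 2}\zeta_i t^{2^i}\right) + \zeta(t)^{-1}\left(\sum_{i\ge 1}\zeta_i^2 t^{2^{i+1}}\right),
\]
so that $Q(t)\zeta_1 = \sum_{i\ge 2}\zeta_i t^{2^i-2} + t^{-2}\zeta(t)^{-1}\sum_{i\ge 1}\zeta_i^2 t^{2^{i+1}}$. The second summand, after using $\zeta(t) = t + \zeta_1 t^2 + \zeta_2 t^4 + \cdots$ and expanding $\zeta(t)^{-1} = t^{-1}(1 + \zeta_1 t + \cdots)^{-1} = t^{-1}(1 + \zeta_1 t + (\zeta_1^2+\zeta_2) t^2 + \cdots)$, contributes $t^{-3}(1+\zeta_1 t + \cdots)(\zeta_1^2 t^4 + \zeta_2^2 t^8 + \cdots) = \zeta_1^2 t + \cdots$, which lives in nonnegative $t$-degree. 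So the negative part of $Q(t)\zeta_1$ comes entirely from $\sum_{i\ge 2}\zeta_i t^{2^i-2}$, giving $Q(t)\zeta_1 = \overline{\zeta}_2 t^2 + \zeta_3 t^6 + \cdots + (\text{degree}\ge 0)$ --- wait, one must be careful: the coefficient of $t^{2^i-2}$ in the first sum is $\zeta_i$, not $\overline\zeta_i$. This is the key point to get right.

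So the heart of the argument is the sign/conjugation bookkeeping: I need to check that the coefficient of $t^{2^i-2}$ in $Q(t)\zeta_1$ equals $\overline{\zeta}_i$ and not $\zeta_i$. The cleanest route is to instead use identity (1), $\zeta(s) + \zeta(s)^2\zeta(t)^{-1} = \sum_i (Q(t)\zeta_i)(s^{2^i} + s^{2^{i+1}}t^{-2^i})$, and extract the coefficient of $s^{2}$ on both sides. On the right, $s^2$ appears from the $i=1$ term ($Q(t)\zeta_1 \cdot s^2$) and from the $i=0$ term ($Q(t)\zeta_0 \cdot s^2 t^{-1} = s^2 t^{-1}$), giving $Q(t)\zeta_1 + t^{-1}$. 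On the left, $\zeta(s) = \sum_j \zeta_j s^{2^j}$ contributes $\zeta_1 s^2$ so $\zeta_1$ to the $s^2$-coefficient, and $\zeta(s)^2 \zeta(t)^{-1} = \left(\sum_j \zeta_j^2 s^{2^{j+1}}\right)\zeta(t)^{-1}$ contributes from $j=0$ the term $s^2\zeta(t)^{-1}$, i.e. $\zeta(t)^{-1}$. So $Q(t)\zeta_1 = \zeta_1 + \zeta(t)^{-1} + t^{-1}$, a remarkably clean closed form. Now $\zeta(t)^{-1}$ is exactly the series whose relation to $t^{-1}$ encodes the antipode: from $\zeta(\overline\zeta(t)) = t$ one shows $\sum_i \overline\zeta_i t^{2^i} = \zeta(t)^{-1}\cdot(\text{something})$; more directly, the standard identity $\overline\zeta(t) = \sum \overline\zeta_i t^{2^i}$ together with the fact that the coefficient of $t^{2^i-1}$ in $\zeta(t)^{-1} = t^{-1}\overline{(\cdot)}$ pattern gives the conjugates. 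The main obstacle --- and the only real content --- is pinning down this last identification: that the coefficient of $t^{2^i-2}$ in $\zeta(t)^{-1} + t^{-1} + \zeta_1$ is precisely $\overline\zeta_i$ for $i\ge 2$. I expect this follows by comparing $t\cdot\zeta(t)^{-1}$ with the defining recursion $\sum_j \zeta_{i-j}^{2^j}\overline\zeta_j = 0$ ($i\ge 1$) for the Milnor antipode, since $t\zeta(t)^{-1} = (1 + \zeta_1 t + \zeta_2 t^3 + \cdots)^{-1}$ in the variable $t$ after the substitution $t\mapsto$ appropriate power --- matching degrees $2^i - 1$ gives exactly $\overline\zeta_i$. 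Once that is in hand, reading off the $t^{2^i-2}$ coefficient of $Q(t)\zeta_1$ for $i\ge 2$ yields $Q^{2^i-2}\zeta_1 = \overline\zeta_i$, and the $\zeta_1$ and $t^{-1}$ terms only affect degrees $0$ and (notionally) $-1$, hence do not interfere for $i\ge 2$.

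In summary: (1) specialize identity (1) of Theorem~\ref{thm:action} to the $s^2$-coefficient to get the closed form $Q(t)\zeta_1 = \zeta_1 + t^{-1} + \zeta(t)^{-1}$; (2) identify the coefficient of $t^{2^i-2}$ in $\zeta(t)^{-1}$ with the Milnor antipode generator $\overline\zeta_i$ via the antipode recursion $\sum_j \zeta_{i-j}^{2^j}\overline\zeta_j = 0$; (3) conclude $Q^{2^i-2}\zeta_1 = \overline\zeta_i$ for $i\ge 2$, noting the extra terms $\zeta_1$ and $t^{-1}$ are concentrated in degrees $\le 0$ in $t$ and so contribute nothing in the relevant range. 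Step (2) is the part that requires genuine care with Milnor's conventions.
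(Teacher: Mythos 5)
Your step (1) is correct and is a genuine (if small) variant of the paper's: you extract the coefficient of $s^2$ from identity (1) of Theorem~\ref{thm:action}, while the paper reads off the $n=1$ case of identity (2); both yield the same clean closed form $Q(t)\zeta_1 = t^{-1} + \zeta_1 + \zeta(t)^{-1}$.

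However, step (2) --- identifying the coefficient of $t^{2^i-2}$ in $\zeta(t)^{-1}$ with $\overline{\zeta}_i$ --- is the entire content of the corollary, and your sketch does not establish it. The antipode recursion $\sum_j \zeta_{i-j}^{2^j}\overline{\zeta}_j = 0$ encodes that $\overline{\zeta}(t) = \sum\overline{\zeta}_j t^{2^j}$ is the \emph{compositional} inverse of $\zeta(t)$, whereas the recursion computing the coefficients of the \emph{multiplicative} inverse $\zeta(t)^{-1} = t^{-1}(1+\zeta_1 t + \zeta_2 t^3 + \cdots)^{-1}$ looks completely different (no Frobenius twists appear), and "matching degrees $2^i-1$" does not reconcile them. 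What bridges the two is a Lagrange-inversion/residue change of variables, which is exactly what the paper does: writing $Q^{2^i-2}\zeta_1 = \mathrm{res}\bigl(t^{-2^i+1}\zeta(t)^{-1}\,dt\bigr)$ (the $t^{-1}$ and $\zeta_1$ terms drop out for $i\ge 2$), substitute $u=\zeta(t)$, $t = \overline{\zeta}(u)$; in characteristic 2 every term of $\zeta$ has even exponent except the leading $t$, so $dt = du$, and the residue becomes $\mathrm{res}\bigl(\overline{\zeta}(u)^{-2^i+1}u^{-1}\,du\bigr)$. Then $\overline{\zeta}(u)^{-2^i+1} = \overline{\zeta}(u)\cdot\bigl(\overline{\zeta}(u)^{2^i}\bigr)^{-1}$, and $\overline{\zeta}(u)^{2^i} = u^{2^i}\bigl(1 + O(u^{2^i})\bigr)$, so the coefficient of $u^0$ is the coefficient of $u^{2^i}$ in $\overline{\zeta}(u)$, namely $\overline{\zeta}_i$. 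To complete your proof you would need to supply this change-of-variables computation (or an equivalent argument) in place of the appeal to the antipode recursion, which by itself does not yield the claimed coefficient.
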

\begin{proof} From Theorem \ref{thm:action}(2) above in the case
$n=1$ we get
	\[
	Q(t)\zeta_1 = t^{-1}+\zeta_1 + \zeta(t)^{-1}.
	\]
So, for $i\ge 2$, change of variables and a quick computation gives:
	\begin{align*}
	Q^{2^i-2}\zeta_1 &= \mathrm{res}(t^{-2^i+1}\zeta(t)^{-1}dt)\\
	&=\mathrm{res}(\overline{\zeta}(u)^{-2^i +1}u^{-1}du) = \overline{\zeta}_i.
	\end{align*}
\end{proof}

\begin{corollary}[Steinberger] We have
$Q^{2^i}\zeta_i = \zeta_{i+1} + \zeta_i^2\zeta_1$ and
$Q^{2^i}\overline{\zeta}_i = \overline{\zeta}_{i+1}$.
\end{corollary}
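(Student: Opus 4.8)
The plan is to deduce both formulas from Theorem \ref{thm:action}(2) by extracting the appropriate residues, exactly as in the preceding corollary, but now reading off a coefficient of $t$ rather than integrating against a power of $t$. Recall that from part (2) with general $n$ we have
\[
t^{2^n}Q(t)\zeta_n = \left(\sum_{i\ge n+1}\zeta_i t^{2^i}\right)
+ \zeta(t)^{-1}\left(\sum_{i\ge n}\zeta_i^2 t^{2^{i+1}}\right),
\]
so $Q^{2^i}\zeta_i$ is the coefficient of $t^{2^i + 2^i} = t^{2^{i+1}}$ in $Q(t)\zeta_i$, i.e. the coefficient of $t^{2^{i+1}+2^i}$ on the right hand side. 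The first sum contributes $\zeta_{i+1}$ (the $i+1$ term, since $2^{i+1}$... wait, we need the coefficient of $t^{2^{i+1}+2^i}$, and $2^{i+1}+2^i = 3\cdot 2^i$ is not a power of $2$, so the first sum contributes nothing; instead one reads the coefficient of $t^{3\cdot 2^i}$ from $\zeta(t)^{-1}\sum_{j\ge i}\zeta_j^2 t^{2^{j+1}}$).

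So the real work is purely formal manipulation of $\zeta(t)^{-1}$. First I would record that $\zeta(t) = t + \zeta_1 t^2 + \zeta_2 t^4 + \cdots$, so $\zeta(t)^{-1} = t^{-1}(1 + \zeta_1 t + \zeta_2 t^3 + \cdots)^{-1} = t^{-1}(1 + \zeta_1 t + (\zeta_1^2)t^2 + \cdots)$ — more precisely I would expand $\zeta(t)^{-1} = t^{-1}\sum_{m\ge 0} c_m t^m$ where the $c_m$ are polynomials in the $\zeta_j$ with $c_0 = 1$, $c_1 = \zeta_1$, and in general $c_m$ determined recursively by $\sum_{a+b=m}\zeta$-coefficient $\cdot\, c_b = 0$ for $m\ge 1$ (using that squaring is additive in characteristic $2$, so many cross terms vanish). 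Then the coefficient of $t^{3\cdot 2^i}$ in $\zeta(t)^{-1}\sum_{j\ge i}\zeta_j^2 t^{2^{j+1}}$ picks up contributions from $j=i$ (needing $c_m$ with $-1 + m + 2^{i+1} = 3\cdot 2^i$, i.e. $m = 2^i+1$, giving $c_{2^i+1}\zeta_i^2$) and from $j = i+1$ (needing $-1+m+2^{i+2} = 3\cdot 2^i$, i.e. $m = -2^i+1 < 0$ for $i\ge 1$, so nothing), so $Q^{2^i}\zeta_i = c_{2^i+1}\zeta_i^2$. The claimed identity $Q^{2^i}\zeta_i = \zeta_{i+1} + \zeta_i^2\zeta_1$ then amounts to the purely combinatorial fact that $c_{2^i+1}\zeta_i^2 = \zeta_{i+1}+\zeta_i^2\zeta_1$, which I expect to verify by comparing coefficients of $t^{2^{i+1}}$ in the identity $\zeta(t)\cdot\zeta(t)^{-1} = 1$ directly — this is essentially the Milnor relation expressing $\zeta_{i+1}$ in terms of lower $\zeta_j$ and the coefficients of $\zeta(t)^{-1}$.

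For the conjugate statement $Q^{2^i}\overline{\zeta}_i = \overline{\zeta}_{i+1}$, the cleanest route is to substitute the conjugate series: applying the ring antipode, or equivalently substituting $\overline{\zeta}(u)$ for $t$ and using $\zeta(\overline{\zeta}(u)) = u$ as in the previous corollary's proof, converts the recursion (2) for $Q(t)\zeta_n$ into the analogous recursion for $Q(u)\overline{\zeta}_n$ with $\overline{\zeta}(u)^{-1}$ in place of $\zeta(t)^{-1}$; since $\overline{\zeta}(u)^{-1} = u^{-1} + \text{(higher)}$ has the same leading shape, the identical residue extraction gives $Q^{2^i}\overline{\zeta}_i = \overline{c}_{2^i+1}\overline{\zeta}_i^2$ with $\overline{c}_m$ the coefficients of $\overline{\zeta}(u)^{-1}$. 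Alternatively — and this is probably shorter to write — I would apply the conjugation (Hopf algebroid antipode $\chi$) to the first identity $Q^{2^i}\zeta_i = \zeta_{i+1}+\zeta_i^2\zeta_1$ directly, using that the $Q^j$ are compatible with $\chi$ in the sense implied by Remark \ref{rmk:naturality} (the Tate-valued Frobenius is natural for $\mathbb{E}_\infty$-maps, and $\chi$ is induced by the swap on $k\wedge k$), and that $\chi(\zeta_1) = \overline{\zeta}_1 = \zeta_1$, $\chi(\zeta_i^2\zeta_1)$ involves only $\overline{\zeta}_j$ for $j\le i$; the combinatorics should collapse to $\overline{\zeta}_{i+1}$.

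The main obstacle will be the bookkeeping in the formal-power-series step: correctly expanding $\zeta(t)^{-1}$, keeping track of which dyadic exponents contribute, and recognizing the resulting polynomial identity among the $\zeta_j$ as a known Milnor relation rather than something requiring fresh proof. None of it is deep, but it is the kind of calculation where a stray factor of $2$ (which, happily, vanishes here) or an off-by-one in the exponent $2^{i+1}$ versus $3\cdot 2^i$ can derail the argument, so I would set up the indexing carefully once at the start. Everything else — the reduction to residues, the conjugation argument — is a direct replay of the two corollaries already proved above.
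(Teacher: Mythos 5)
Your plan to read off a single coefficient from Theorem~\ref{thm:action}(2) is sound in outline, but the indexing contains a fatal error that propagates to a false final identity. With $n=i$ in (2), $Q^{2^i}\zeta_i$ is the coefficient of $t^{2^i}$ in $Q(t)\zeta_i$, hence the coefficient of $t^{2^{i+1}}$ (not $t^{3\cdot 2^i}$) on the right-hand side of
\[
t^{2^i}Q(t)\zeta_i = \sum_{j\ge i+1}\zeta_j t^{2^j} + \zeta(t)^{-1}\sum_{j\ge i}\zeta_j^2 t^{2^{j+1}}.
\]
You add $2^i$ twice — once in declaring $Q^{2^i}\zeta_i$ to be the $t^{2^{i+1}}$-coefficient of $Q(t)\zeta_i$ (it is the $t^{2^i}$-coefficient) and once more to absorb the $t^{2^i}$ on the left. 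This sends you to $t^{3\cdot 2^i}$, causing you to throw away the first sum (which at $t^{2^{i+1}}$ contributes exactly $\zeta_{i+1}$) and to hunt for the wrong coefficient of $\zeta(t)^{-1}$: you want its constant term $\zeta_1$ (your $c_1$, since you wrote $\zeta(t)^{-1} = t^{-1}\sum c_m t^m$), not $c_{2^i+1}$. The identity you say remains to be verified, $c_{2^i+1}\zeta_i^2 = \zeta_{i+1}+\zeta_i^2\zeta_1$, is in fact \emph{false}: the $\zeta_j$ are algebraically independent, so $\zeta_{i+1}$ cannot be a multiple of $\zeta_i^2$. Correcting the exponent, the first sum gives $\zeta_{i+1}$, the $j=i$ term of the second sum gives $\zeta_i^2 \cdot c_1 = \zeta_i^2\zeta_1$, and all $j>i$ terms vanish because the relevant coefficient of $\zeta(t)^{-1}$ sits below $t^{-1}$. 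There is then no residual polynomial identity to check; the corollary drops out immediately.

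For comparison, the paper reads off the coefficient of $t^0 s^{2^{i+1}}$ from the two-variable identity (1) rather than from (2). That is marginally cleaner — on the right one sees $Q^{2^i}\zeta_i$ plus a term killed by instability, and on the left $\zeta_{i+1}$ and the constant term of $\zeta(t)^{-1}$ appear without any expansion — but once your exponent is fixed the (2)-based computation is equivalent and equally short. Your two suggested routes to $Q^{2^i}\overline{\zeta}_i = \overline{\zeta}_{i+1}$ (substituting $\overline{\zeta}(u)$ for $t$, or applying the antipode) are both reasonable and match in spirit the paper's terse appeal to induction and the defining relation for conjugation, though you should be aware that the same exponent slip infects your sketch there as well.
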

\begin{proof} The case $i=0$ is evident, so assume $i\ge 1$.
The coefficient of $t^0s^{2^{i+1}}$ on the right
hand side of Theorem \ref{thm:action}(1) is visibly
$Q^{2^i}\zeta_i + Q^0(\zeta_i)=Q^{2^i}\zeta_i$. The constant
term of $\zeta(t)^{-1}$ is $\zeta_1$, so the coefficient of
$t^0s^{2^{i+1}}$ on the left hand side is 
$\zeta_{i+1}+\zeta_i^2\zeta_1$. The other identity follows
from this one by induction and the defining relation for conjugation.
\end{proof}

\bibliographystyle{amsalpha}
\nocite{*}
\bibliography{Bibliography}

\end{document}